\theoremstyle{plain}
\newtheorem{thm}{Theorem}[section]
\newtheorem{lemma}[thm]{Lemma}
\newtheorem{coro}[thm]{Corollary}
\theoremstyle{definition}
\newtheorem{defn}[thm]{Definition}
\newtheorem{ex}[thm]{Example}
\theoremstyle{remark}
\newtheorem{rem}[thm]{Remark}
\newcommand{\thmref}[1]{Theorem~\ref{#1}}
\newcommand{\lemmaref}[1]{Lemma~\ref{#1}}
\newcommand{\cororef}[1]{Corollary~\ref{#1}}
\newcommand{\exref}[1]{Example~\ref{#1}}
\newcommand{\secref}[1]{Section~\ref{#1}}
\newcommand{\tabref}[1]{Table~\ref{#1}}
\newcommand{\figref}[1]{Figure~\ref{#1}}
\newcommand{\NN}{\mathbb{N}}
\newcommand{\RR}{\mathbb{R}}
\newcommand{\CC}{\mathbb{C}}
\newcommand{\PP}{\mathbb{P}}
\newcommand{\N}[1]{N_{#1}}
\newcommand{\Ns}[2]{N_{#1}#2}
\newcommand{\im}[1]{\operatorname{im} (#1)}
\newcommand{\codim}[1]{\operatorname{codim} (#1)}
\newcommand{\Gr}[2]{\operatorname{\mathbb{G}} (#1,#2)}
\newcommand{\adj}[1]{\operatorname{adj}(#1)}
\newcommand{\EDD}[1]{\operatorname{\epsilon}(#1)}
\renewcommand{\dim}[1]{\operatorname{dim} (#1)}
\renewcommand{\deg}[1]{\operatorname{deg} (#1)}
\begin{document}

\title{The numerical algebraic geometry of bottlenecks}
\address{Department of mathematics, KTH, 10044,
  Stockholm, Sweden}
\urladdr{https://people.kth.se/~daek/}
\author{David Eklund}
\email{daek@math.kth.se}
\keywords{Numerical algebraic geometry, systems of polynomials, triangulation of
  manifolds, reach of manifolds}
\subjclass[2010]{14Q20, 65D18}

\begin{abstract}
This is a computational study of bottlenecks on algebraic
varieties. The bottlenecks of a smooth variety $X \subseteq \CC^n$ are
the lines in $\CC^n$ which are normal to $X$ at two distinct
points. The main result is a numerical homotopy that can be used to
approximate all isolated bottlenecks. This homotopy has the optimal
number of paths under certain genericity assumptions. In the process
we prove bounds on the number of bottlenecks in terms of the Euclidean
distance degree. Applications include the optimization problem of
computing the distance between two real varieties. Also, computing
bottlenecks may be seen as part of the problem of computing the reach
of a smooth real variety and efficient methods to compute the reach
are still to be developed. Relations to triangulation of real
varieties and meshing algorithms used in computer graphics are
discussed in the paper. The resulting algorithms have been implemented
with Bertini \cite{bertini} and Macaulay2 \cite{M2}.
\end{abstract}

\maketitle

\section{Introduction}
Let $X, Y \subseteq \CC^n$ be smooth varieties. For $x \in X$ and $y
\in Y$ let $\Ns{x}{X} \subseteq \CC^n$ and $\Ns{y}{Y} \subseteq \CC^n$
denote the normal spaces at $x$ and $y$. Consider the incidence
correspondence
\[
I(X,Y) = \{(x,y) \in X\times Y: y \in \Ns{x}{X}, \; x \in
\Ns{y}{Y}\}.
\]
The goal is to set up an optimal numerical homotopy for the purpose of
approximating the isolated points of $I(X,Y)$ given equations defining
$X$ and $Y$ as well as the dimensions of $X$ and $Y$. In this context
optimality means that the number of homotopy paths is equal to the
number of solutions. In the sequel the isolated points of $I(X,Y)$
will be denoted $I(X,Y)_0$.

The main idea behind the homotopy is to first solve the following two
initial problems for general points $p,q \in \CC^n$:
\[
\begin{array}{cc}
\Sigma_1=\{x \in X: p \in \Ns{x}{X}\}, & \Sigma_2=\{y \in Y: q \in \Ns{y}{Y}\}.
\end{array}
\]
The set of start points for the homotopy is essentially the product
set $\Sigma_1 \times \Sigma_2$. The system is then deformed so that
$p$ approaches $y$ and $q$ approaches $x$, which yields points on the
incidence set $I(X,Y)$. One may, as we do below, let $p=q$.

For $(x,y) \in I(X,Y)$ with $x\neq y$ the line joining $x$ and $y$ is
normal to $X$ at $x$ and $Y$ at $y$. We will refer to these as
bottlenecks. These lines correspond to the nontrivial critical
points of the squared Euclidean distance function $\sum_{i=1}^n
(x_i-y_i)^2$, nontrivial meaning that $x\neq y$. Now, if $Y=\{p\}$ is
just a point, this reduces to a type of polar variety which will be
called the normal locus of $X$ with respect to $p$: $\{x \in X: p \in
N_xX\}$. The normal class, which is the cohomology class of the normal
locus for a general $p \in \CC^n$, has been studied in \cite{DHOST,P}
where its degree is called the Euclidean distance degree. See also
\cite{H} for a relation to the problem of finding real points on
algebraic varieties.

In \secref{sec:applications} we describe some applications of
bottlenecks to real geometry. \secref{sec:homomethods} and
\secref{sec:EDD} contain background material on numerical homotopy
methods and the Euclidean distance degree. In \thmref{thm:count} we
show that optimality holds for the homotopy if $I(X,Y)$ is smooth and
finite and $X$ and $Y$ satisfy some additional genericity
assumptions. This means that under these assumptions, $I(X,Y)$ is the
product of the Euclidean distance degrees of $X$ and $Y$. The homotopy
itself is formulated in \secref{sec:homotopy} and in
\thmref{thm:paths} we prove that the isolated points of $I(X,Y)$ are
included among the end points of the homotopy. In the process we prove
bounds on the number of bottlenecks in terms of the Euclidean distance
degrees of $X$ and $Y$, see \cororef{coro:bound1} and
\cororef{coro:bound2}. In \secref{sec:projections} we briefly consider
a variant of the problem where $X$ and $Y$ are projected to a lower
dimensional affine space. The motivation for this is the usefulness of
dimensionality reduction in various applications. In
\secref{sec:examples} and \secref{sec:complexity} we give some
illustrative examples and compare the method to a more naive approach
to solve the problem.

\subsection{Applications to real geometry} \label{sec:applications}
Let $X, Y \subseteq \RR^n$ be real algebraic sets and suppose that the
complexifications $X_{\CC}$ and $Y_{\CC}$ are smooth complex
varieties. The real points of the Euclidean normal space
$\Ns{x}{X_{\CC}}$ at a real point $x \in X$ is equal to the standard
normal space of the real submanifold $X \subseteq \RR^n$. The method
presented in this paper may be applied to the problem of computing the
distance between $X$ and $Y$. If we let $I(X,Y)$ be defined the same
way as for complex varieties, we have that $I(X,Y)$ consists of the
real points of $I(X_{\CC},Y_{\CC})$. If there is a point $(x,y) \in
X\times Y$ realizing the infimum of the distance between points of $X$
and $Y$, for example if $X$ or $Y$ is compact, then $(x,y) \in
I(X,Y)$. The same is true of the maximal distance if $X$ and $Y$ are
compact. Considering the case $X=Y$, the method may also be applied to
compute a lower bound for the distance between connected components of
$X$. As above, if there is a point $(x,y) \in X\times X$ that realizes
the minimal distance between points of any two different connected
components of $X$, for example if $X$ is compact and not connected,
then $(x,y) \in I(X,X)$.

The special case $X=Y$ is of particular interest. The meaning of the
isolated points of $I(X, X)$ relates to the \emph{condition number} or
\emph{reach} of $X$. This is roughly speaking the maximal size of an
$\epsilon$-neighborhood of $X$ that embeds smoothly in $\RR^n$. More
precisely, the reach of $X$ is the supremum of the set of numbers $r
\geq 0$ such that all points in $\RR^n$ at distance less than $r$ from
$X$ has a unique closest point on $X$. Assume that $X$ is compact and
$\dim{X}>0$, in which case it has positive and finite reach. The reach
is an important invariant for methods that seek to build a model of
$X$ by covering it with balls of the ambient space $\RR^n$ and forming
the \u{C}ech complex or Vietoris-Rips complex corresponding to the
balls. This has been proposed as a method to compute the homology of
$X$ \cite{NSW}, see also the papers \cite{BKSW,DEHH,HW} on persistent
homology in algebraic geometry. More generally, triangulation, meshing
and similar procedures are important problems in computer graphics
among other disciplines \cite{ACM, meshing, BG, BO, CL}. The reach is
typically part of the input to algorithms proposed to solve such
problems. Let $\rho$ be the minimal radius of curvature on $X$, where
the radius of curvature at a point $x \in X$ is the reciprocal of the
maximal curvature of a geodesic passing through $x$. Also, note that
$I(X_{\CC},X_{\CC})$ contains $X_{\CC}$ as an excess component in the
form of $X_{\CC} \times X_{\CC}$ intersected with the diagonal of
$\CC^n \times \CC^n$. The reach can be calculated in two stages, it is
the minimum of $\rho$ and $\frac{1}{2} \inf\{||x-y||:(x,y) \in I(X,X)
\setminus X\}$, see \cite{reachestimate}. If $I(X_{\CC},X_{\CC})
\setminus X_{\CC}$ is finite and $X_{\CC}$ satisfies the genericity
conditions explained in \secref{sec:EDD}, the latter infimum is a
minimum which may be computed effectively using the homotopy presented
in this paper.

\subsection{Related work and future developments}
There are some similarities between the method presented in this paper
and the $\CC^*$-action homotopy explored in the \cite{DRESW} but that
homotopy involves higher powers of the path variable which is not the
case for the homotopy described below. Based on numerical experiments
this seems to be of importance in practice even though we have not
carried out any such complexity analysis. Also, the intersection
method presented in \cite{SVW} is closely related to the present work.

As we are interested in applications to real geometry and the number
of real solutions to the problem is sometimes much smaller than the
number of complex solutions, it is worth considering a similar
approach to compute the real solutions directly. For example one may
approach this problem via real path tracking \cite{BS}.

We have chosen to consider the Euclidean normal bundles of $X$ and $Y$
for the sake of applications to real geometry. However, the same
technique is applicable for more general vector bundles on $X$ and $Y$
which should make possible other interesting applications.

\section{Numerical homotopies for isolated roots} \label{sec:homomethods}

In this section we give a brief summary of numerical homotopy methods,
see \cite{BHSW, SW} for details. These methods can be used to
numerically find solutions to polynomial systems. Moreover it is
possible to guarantee that all isolated solutions of the given system
have been found. In homotopy methods we set up a deformation from a
\emph{start system} to the \emph{target system}, which is the system
that we would like to solve. The start system typically has known
solutions and the idea is to track these points using a numerical
predictor/corrector method to solutions of the target system.

More in detail, consider $m$ polynomials $H=\{H_1,\dots,H_m\} \subset
\CC[x_1,\dots,x_m,t]$. Let $\mathcal{X} \subseteq \CC^m \times \CC$ be
the subscheme defined by the ideal $(H_1,\dots,H_m)$ and let $\pi:
\mathcal{X} \rightarrow \CC$ be the projection. Suppose that we are
interested in $X=\pi^{-1}(0)$, for instance we might want to
approximate the isolated points of $X$. An example of this situation
is that we are given $m$ polynomials $\{F_1,\dots,F_m\}$ which define
$X$ and let $H_i = F_i + tG_i$ for sufficiently general $G_i \in
\CC[x_1,\dots,x_m]$ with $\deg{F_i}=\deg{G_i}$.

Homotopy methods can be used to approximate isolated points of $X$ by
first approximating the isolated points of another fiber, say
$\pi^{-1}(1)$. Suppose for simplicity that $\pi^{-1}(1)$ is smooth and
finite. We then choose a path $\alpha:[0,1] \rightarrow \CC$ with
$\alpha(1)=1$ and $\alpha(0)=0$ and track the so-called solution
paths. The solution paths are maps $\beta:(0,1] \rightarrow
  \mathcal{X}$ which satisfy $\pi \circ \beta=\alpha$ and there is one
  for each isolated point of $\pi^{-1}(1)$, provided that $\alpha$ is
  general enough as to avoid a finite number of points in $\CC$. After
  projection to $\CC^m$ a solution path may also be viewed as a map
  $(0,1] \rightarrow \CC^m$, a perspective we will employ in the
    sequel. The solution paths satisfy an ODE known as the Davidenko
    equation and they can be tracked using an ODE solver. The isolated
    points of $\pi^{-1}(1)$ are known as start points in this context
    and they are the initial values for the ODE. If $\beta$ is a
    solution path that converges, the limit point is called the end
    point of the path. The tracking is simplest in the case of
    non-singular paths, that is when the Jacobian of $H$ with respect
    to $\{x_1,\dots,x_m\}$ has full rank at all points of the path,
    including the end point if the path is convergent. The end point
    is in this case a smooth isolated point of $X$. The next simplest
    case is a convergent solution path which is non-singular except at
    the end point; such a path may converge to a multiple isolated
    point of $X$ or a higher dimensional component of $X$. To track
    such paths, or even paths of higher multiplicity, one may employ
    special techniques which are described in detail in \cite{SW}.

In order to guarantee that all the isolated points of $X$ are among
the end points of solution paths of the homotopy, we must have some
control over the root count. This is the case for example if
$\pi^{-1}(1)$ is smooth and finite, $\pi^{-1}(t)$ is smooth and finite
for general $t \in \CC$ and in addition $|\pi^{-1}(1)|=|\pi^{-1}(t)|$
for general $t \in \CC$. This is the type of situation that we will
encounter in this paper. Note that even if $\pi^{-1}(t)$ is smooth and
finite for some $t \in \CC$, the corresponding subscheme of $\PP^m$
defined by homogenizing the system $H$ might have some nasty
components at infinity, possibly of higher dimension. This in effect
reduces the root count in $\CC^m$ in the light of B\'{e}zout's theorem
and Fulton's excess intersection formula \cite{F} and this can be
taken advantage of to set up more efficient homotopies with fewer
paths to follow than the B\'{e}zout number would suggest. More
generally we may consider only solutions contained in an open subset
$U \subseteq \PP^m$ and use this to reduce the number of homotopy
paths. We will do this in connection with over determined systems in
\secref{sec:squaring} and \secref{sec:main}.

In order for it to make sense to use a homotopy to solve the problem,
$\pi^{-1}(1)$ should be better understood or easier to handle somehow
than $\pi^{-1}(0)$. One may for example set up the homotopy $H$ in
such a way that $\pi^{-1}(1)$ has known isolated points. For the
homotopy studied in this paper the start point computation is not
trivial but of lower order complexity compared to the main problem of
approximating the isolated points of $\pi^{-1}(0)$, see
\thmref{thm:count} and \cororef{coro:bound1}.

Numerical homotopies often depend on parameters in such a way that
only a generic choice of these parameters yields a homotopy with
desired properties. In practice one uses random parameters which
results in probabilistic algorithms but since a general choice
suffices it is often said that the desired properties hold with
probability 1. Let $S^1 \subset \CC$ be the unit circle. The homotopy
presented in this paper depends on two parameters $p_0 \in \CC^n$ and
$\gamma \in S^1$ as well as the squaring of the systems for $X$ and
$Y$ explained in \secref{sec:squaring}. These parameters will be
assumed to be general, and in practice they are taken as random
vectors or matrices with complex entries.

\section{Euclidean distance degree and bottlenecks} \label{sec:EDD}
Let $X,Y \subseteq \CC^n$ be smooth subvarieties and consider the
closures $\bar{X},\bar{Y} \subseteq \PP^n$. The hyperplane at infinity
$H_{\infty}$ intersects $\bar{X}$ and $\bar{Y}$ in two subschemes
$X_{\infty}=\bar{X}\cap H_{\infty}$ and $Y_{\infty}=\bar{Y}\cap
H_{\infty}$. For a projective variety $Z \subseteq \PP^m$, we use
$\hat{Z} \subseteq \CC^{m+1}$ to denote the cone over $Z$.

The smooth quadric $Q_{\infty} \subset H_{\infty}$ defined by
$x_1^2+\dots +x_n^2=0$ is known as the isotropic quadric in
$\PP^{n-1}$. This choice of quadric induces a bilinear form on $\CC^n$
as well as an orthogonality relation: for $x,y \in \CC^n$ the
condition $x \perp y$ is equivalent to $x^Ty=0$ where $x$ and $y$ are
viewed as column vectors. This is the definition of orthogonality we
are using for our bottleneck problem. For a smooth variety $X
\subseteq \CC^n$ we will write $(T_xX)^{\perp}$ for the orthogonal
complement of the embedded tangent space $T_xX$ translated to the
origin. Similarly $z \perp T_xX$ for $z \in \CC^n$ means that $z$ is
orthogonal to $T_xX$ translated to the origin. The normal space at a
point $x \in X$ is by definition $(T_xX)^{\perp}$ translated to
$x$. For a general point $p_0 \in \CC^n$, $\{x \in X:(x-p_0) \perp
T_xX\}$ is finite and the number of points does not depend on
$p_0$. This number is called the Euclidean distance degree of $X$ and
is denoted $\EDD{X}$. For $X\times Y \subseteq \CC^{2n}$, we have that
$\EDD{X\times Y}=\EDD{X}\EDD{Y}$.

At a point $x \in X$ lines in the normal space through $x$ are
parameterized by $\PP^{n-d-1}$ where $d=\dim{X}$. There is an induced
map $\Phi:X \times \PP^{n-d-1} \rightarrow \Gr{2}{n+1}$ to the
Grassmannian of lines in $\PP^n$ which maps a line in $\CC^n$ to its
closure in $\PP^n$. Similarly there is a map $\Gamma:Y \times
\PP^{n-e-1} \rightarrow \Gr{2}{n+1}$ where $e=\dim{Y}$ and a product
map $$\Phi \times \Gamma: X \times Y \times \PP^{n-d-1} \times
\PP^{n-e-1} \rightarrow \Gr{2}{n+1} \times \Gr{2}{n+1}.$$ Let
$R=\pi((\Phi \times \Gamma)^{-1}(\Delta))$ where $\Delta \subset
\Gr{2}{n+1} \times \Gr{2}{n+1}$ is the diagonal and $\pi$ is the
projection to $X \times Y$. Note that there is an excess component of
$I(X,Y)$ in the form of $X\cap Y$ written as $(X\times Y) \cap
\Delta'$ where $\Delta' \subset \CC^n \times \CC^n$ is the diagonal
and as sets $I(X,Y)=(X\cap Y) \cup R$. We expect that
$\dim{\im{\Phi}}=\dim{\im{\Gamma}}=n-1$ and since
$\dim{\Gr{2}{n+1}}=2(n-1)$ we expect that $\dim{R}=0$.

\begin{ex} \label{ex:specialpos}
Let $Q \subset \PP^2$ be the isotropic quadric defined by
$x_0^2+x_1^2+x_2^2=0$ and let $p \in Q$. Let $a,b \in l$ be general
points on the tangent line $l$ to $Q$ at $p$. Now consider lines $X,Y
\subset \CC^3$ passing through $a$ respectively $b$ at infinity, but
otherwise general. We may assume that $X$ and $Y$ are not coplanar. In
this case $I(X,Y)=\emptyset$ and $\EDD{X}=\EDD{Y}=1$. To see that
$I(X,Y)$ is empty, suppose that $(x,y) \in I(X,Y)$ and let $v, w \in
\CC^3$ represent the directions of $X$ and $Y$. Note that $v,w$ are
homogeneous coordinates of the points $a,b \in \PP^2$. Then $(x-y)
\perp v,w$ and hence $(x-y)$ represents the line $l$ as well as the
point $p \in \PP^2$. This means that the closure $l' \subset \PP^3$ of
the line joining $x$ and $y$ intersects the hyperplane at infinity at
$p$ and the plane spanned by $l'$ and $l$ contains both $X$ and $Y$, a
contradiction.
\end{ex}

As we saw in the last example, the bottleneck problem may have a
kind of solutions at infinity which we are not interested in computing
in the present context.
\begin{defn}
Let $X,Y \subseteq \CC^n$ be smooth varieties. If there is a pair of
points $(x,y) \in \CC^n \times \CC^n$, with $x,y \neq 0$, $(x,y) \in
\hat{X}_{\infty} \times \hat{Y}_{\infty}$, $(x-y) \perp
T_x\hat{X}_{\infty}$ and $(x-y) \perp T_y\hat{Y}_{\infty}$ we will
call the pair a \emph{solution at infinity} to the bottleneck
problem.
\end{defn}

\begin{ex}
Let $A, B \subseteq \PP^{n-1}$ be smooth algebraic sets, possibly
reducible, and consider the cones $X=\hat{A} \setminus \{0\}$ and
$Y=\hat{B} \setminus \{0\}$. We can define the incidence $I(X,Y)$ and
the maps $\Phi$ and $\Gamma$ the same way as for varieties. In this
case, the residual set $R$ above is a cone in the sense that $(tx,ty)
\in R$ for all $(x,y) \in R$ and $t \in \CC^*$. However, if we
restrict $\Phi$ to $W=(X\cap H) \times \PP^{n-d-1}$ where $H \subset
\CC^n$ is a general hyperplane, we have that $\dim{\im{\Phi|_W}} \leq
n-2$ and we expect the images of $\Phi|_W$ and $\Gamma$ to be
disjoint. We thus expect the residual $R$ to be empty for cones. In
particular we can expect the bottleneck problem of two smooth
varieties $X, Y \subseteq \CC^n$ to have no solutions at infinity
beyond $\hat{X}_{\infty} \cap \hat{Y}_{\infty}$. See
\exref{ex:specialpos} for a pair of varieties in special position
where this is not the case.
\end{ex}

We now define the ED-correspondence $$\mathcal{E}_{X\times
  Y}=\{(x,y,p,q) \in \CC^{4n}: (x,y) \in X\times Y, \; (x-p)\perp
T_xX, \; (y-q)\perp T_yY\},$$ with its induced reduced structure. In
\cite{DHOST} the ideal defining $\mathcal{E}_{X\times Y}$ is given and
it is shown in Theorem 4.1 that it is an irreducible variety of
dimension $2n$. If there is a point $(x,y) \in X\times Y$ such that
$T_xX \cap \Ns{x}{X} = \{x\}$ and $T_yY \cap \Ns{y}{Y} = \{y\}$, the
projection $\mathcal{E}_{X\times Y} \rightarrow \CC^{2n}:(x,y,p,q)
\mapsto (p,q)$ is dominant and the generic fiber has $\EDD{X}\EDD{Y}$
points, see \cite{DHOST} Theorem 4.1. On the other hand, the
projection is dominant if and only if $\EDD{X}$ and $\EDD{Y}$ are both
non-zero.

\begin{ex}
Let $X \subset \CC^2$ be the line defined by $x-iy$ where
$i^2=-1$. Note that $X=T_xX=(T_xX)^{\perp}$ for every $x \in
X$. Therefore $\EDD{X}=0$. The same is true for any cone over a
subvariety of the isotropic quadric $\{x_1^2+\dots+x_n^2=0\}$ in
$\PP^{n-1}$.
\end{ex}

We will frequently make the following assumptions on smooth varieties
$X, Y \subseteq \CC^n$:
\begin{equation} \label{eq:assumptions}
\text{$\EDD{X}$, $\EDD{Y}$ are non-zero and $X_{\infty}$, $Y_{\infty}$
  are smooth and intersect $Q_{\infty}$ transversely.}
\end{equation}
Introduce a new variable $u$ and consider the closure
$N=\bar{\mathcal{E}}_{X\times Y} \subset \PP^{4n}$ and its
intersection $N_{\infty}$ with the hyperplane at infinity given by
$u=0$. For $p_0 \in \CC^n$, let $V_s(p_0)$ for $s \in \CC$ be the
family of $2n$-dimensional linear spaces in $\CC^{4n}$ defined by
$p=(1-s)y+sp_0$, $q=(1-s)x+sp_0$.
\begin{lemma} \label{lemma:EDDcorr}
  Let $X,Y$ be as in (\ref{eq:assumptions}) and let $N \subseteq
  \PP^{4n}$ be the closure of the ED-correspondence. Let $p_0 \in
  \CC^n$ be generic.
  \begin{enumerate}
  \item $\deg{N}=\EDD{X}\EDD{Y}$,
  \item the intersection $J_s=\mathcal{E}_{X\times Y} \cap V_s(p_0)$
    is transversal for generic $s \in \CC$ and for $s=1$,
  \item $|J_1|=|J_s|=\EDD{X}\EDD{Y}$ for generic $s \in \CC$,
  \item if the bottleneck problem of $X,Y$ has no solutions at
    infinity, then $N_{\infty} \cap \{x-q=y-p=0\} = \emptyset$.
  \end{enumerate}
\end{lemma}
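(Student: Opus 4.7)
The plan is to address the four parts in the order (3), (2), (1), (4), exploiting a common limiting argument for the last two. For (3) at $s = 1$: substituting $s = 1$ into $V_s(p_0)$ gives $p = q = p_0$, so $J_1$ is canonically identified with the product $\{x \in X: (x-p_0) \perp T_xX\} \times \{y \in Y: (y-p_0) \perp T_yY\}$; for generic $p_0$ each factor is reduced of cardinality $\EDD{X}$ respectively $\EDD{Y}$ by definition of the Euclidean distance degree, giving $|J_1| = \EDD{X}\EDD{Y}$ and transversality. For generic $s$, I would form the total family
\[
\tilde{\mathcal{E}} = \{(x,y,p,q,s) \in \mathcal{E}_{X\times Y} \times \CC : (x,y,p,q) \in V_s(p_0)\},
\]
cut out inside $\mathcal{E}_{X\times Y} \times \CC$ by the $2n$ linear conditions $p = (1-s)y + sp_0$ and $q = (1-s)x + sp_0$. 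Since the fiber at $s = 1$ is zero-dimensional and reduced, the projection $\tilde{\mathcal{E}} \to \CC$ has relative dimension $0$ generically, and generic smoothness in characteristic zero produces a dense open on which fibers are reduced of constant cardinality $\EDD{X}\EDD{Y}$, proving (2) and (3) for generic $s$.

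For (1) I would use Bezout in $\PP^{4n}$: $N$ has dimension $2n$ and the projective closure of $V_1(p_0)$ is a linear subspace of codimension $2n$, so provided the intersection is proper,
\[
\deg{N} = |J_1| + \sum_{z \in N_\infty \cap V_1(p_0)^{\mathrm{proj}}_\infty} \mathrm{mult}_z = \EDD{X}\EDD{Y} + (\text{infinity term}).
\]
A direct computation shows $V_1(p_0)^{\mathrm{proj}} \cap \{u = 0\} = \{P = Q = 0\}$, so (1) reduces to showing $N_\infty \cap \{P = Q = 0\} = \emptyset$. Supposing there were such a point $[X_0:Y_0:0:0:0]$, realized as a limit $(x_k, y_k, p_k, q_k)/t_k \to (X_0, Y_0, 0, 0)$ along $\mathcal{E}_{X\times Y}$ with $t_k \to \infty$, and assuming $X_0 \ne 0$, dividing $(x_k - p_k) \perp T_{x_k}X$ by $t_k$ and taking the limit yields $X_0 \perp T_{X_0}\hat{X}_\infty$. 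Since $\hat{X}_\infty$ is a cone we have $X_0 \in T_{X_0}\hat{X}_\infty$, forcing $T_{X_0}\hat{X}_\infty \subseteq X_0^{\perp} = T_{X_0}\hat{Q}_\infty$, which contradicts the transversality of $X_\infty$ and $Q_\infty$ in (\ref{eq:assumptions}); the case $Y_0 \ne 0$, $X_0 = 0$ is symmetric.

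For (4), I would run the analogous limit argument with the relations $X_0 = Q_0$ and $Y_0 = P_0$ coming from $V_0(p_0) = \{p = y,\, q = x\}$. When $X_0, Y_0 \ne 0$, passing to the limit in both orthogonality constraints gives $(X_0 - Y_0) \perp T_{X_0}\hat{X}_\infty$ and $(X_0 - Y_0) \perp T_{Y_0}\hat{Y}_\infty$, exhibiting $(X_0, Y_0)$ as a solution at infinity to the bottleneck problem and contradicting the hypothesis. If $X_0 = 0$ then also $Q_0 = 0$, while $Y_0 = P_0 \ne 0$, and the limit of the $Y$-constraint yields $Y_0 \perp T_{Y_0}\hat{Y}_\infty$, exactly the transversality contradiction used for (1); the case $Y_0 = 0$ is symmetric.

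The technical heart---and what I expect to be the main obstacle---is the convergence $T_{x_k}X \to T_{X_0}\hat{X}_\infty$ in the Grassmannian of $d$-planes in $\CC^n$ as $x_k/t_k \to X_0 \in \hat{X}_\infty \setminus \{0\}$. This limit relies on the smoothness of $X_\infty$ from (\ref{eq:assumptions}) and requires care near the isotropic locus $X_\infty \cap Q_\infty$ where orthogonality and tangency can interact pathologically; \exref{ex:specialpos} exhibits precisely such a pair of varieties in special position where an extraneous solution at infinity appears, confirming the essential role of (\ref{eq:assumptions}).
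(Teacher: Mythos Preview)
Your proposal is correct and follows essentially the same route as the paper: identify $J_1$ with the product of normal loci, run a limit argument along sequences in $\mathcal{E}_{X\times Y}$ to control $N_\infty$ via the transversality of $X_\infty,Y_\infty$ with $Q_\infty$, and apply B\'ezout for $\deg{N}$. One looseness worth noting: your family argument for generic $s$ only yields $|J_s|\geq \EDD{X}\EDD{Y}$, since additional branches of $\tilde{\mathcal{E}}$ could escape to infinity as $s\to 1$; the paper avoids this by establishing (1) first (showing $N_\infty\cap \bar V_1(p_0)=\emptyset$ and hence $\deg{N}=\EDD{X}\EDD{Y}$) and then reading off the generic-$s$ count from the degree, and your own proof of (1) supplies exactly the missing upper bound, so the gap is only in the ordering. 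Your flagged concern about the convergence $T_{x_k}X\to T_{X_0}\hat X_\infty$ is precisely what the smoothness of $X_\infty$ in (\ref{eq:assumptions}) secures---a variety is smooth along a smooth Cartier divisor, so $\bar X$ is smooth at $[X_0:0]$ and the tangent spaces converge in the Grassmannian; the paper invokes this implicitly.
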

\begin{proof}
Let $(x,y,p,q,0) \in N_{\infty}$. We first show the following:
\begin{equation} \label{eq:cond}
\text{If $x \neq 0$ then $x \in \hat{X}_{\infty}$ and $(x-p) \perp
  T_x\hat{X}_{\infty}$. If $y \neq 0$ then $y \in \hat{Y}_{\infty}$
  and $(y-q) \perp T_y\hat{Y}_{\infty}$.}
\end{equation}
Suppose that $x \neq 0$, the case $y \neq 0$ is similar. Let
$(x_i,y_i,p_i,q_i,u_i) \in \hat{N} \subseteq \CC^{4n+1}$ for $i \in
\NN$ be a sequence such that $x_i \rightarrow x$, $p_i \rightarrow p$
and $u_i \rightarrow 0$ but $u_i \neq 0$ for all $i$. Then $x_i/u_i
\in X$ and $(x_i/u_i-p_i/u_i) \perp T_{x_i/u_i} X$ for all $i$. Hence
$x \in \hat{X}_{\infty}$ and since $(x_i-p_i) \perp T_{x_i/u_i} X$ for
all $i$ it follows that $(x-p) \perp T_x\hat{X}_{\infty}$.

The intersection $J_1=\mathcal{E}_{X\times Y} \cap V_1(p_0)$ is
transversal, for example by generic smoothness and the fact that $p_0
\in \CC^n$ is generic. Clearly, $|J_1|=\EDD{X}\EDD{Y}$. Consider the
linear space $\bar{V}_1(p_0)=\{p-p_0u=q-p_0u=0\} \subset \PP^{4n}$. If
$(x,y,0,0,0) \in N_{\infty} \cap \bar{V}_1(p_0)$, then either $x \neq
0$ or $y \neq 0$. Say that $x\neq 0$. Then, by (\ref{eq:cond}), $x
\perp T_x\hat{X}_{\infty}$ which contradicts that $X_{\infty}$
intersects the isotropic quadric $Q_{\infty}$ transversely. The same
argument applies if $y \neq 0$. It follows that $N_{\infty} \cap
\bar{V}_1(p_0) = \emptyset$ and therefore
$\EDD{X}\EDD{Y}=|J_1|=\deg{N}$. Moreover, it follows that $V_s(p_0)$
intersects $\mathcal{E}_{X\times Y}$ transversely in $\EDD{X}\EDD{Y}$
points for generic $s \in \CC$ as well.

It remains to show the last statement. Let $(x,y,p,q,0) \in N_{\infty}
\cap \{x-q=y-p=0\}$. Then one of $x$ and $y$ is non-zero. If $y=0$
then $p=0$ and $x \perp T_x\hat{X}_{\infty}$ by (\ref{eq:cond}). This
contradicts that $X_{\infty}$ intersects the quadric $Q_{\infty}$
transversely. Hence $y \neq 0$ and similarly $x \neq 0$. Using
(\ref{eq:cond}) one more time we see that $x \in \hat{X}_{\infty}$, $y
\in \hat{Y}_{\infty}$, $(x-y) \perp T_x\hat{X}_{\infty}$ and $(y-x)
\perp T_y\hat{Y}_{\infty}$. Thus, $(x,y)$ represents a solution at
infinity of the bottleneck problem.
  \end{proof}

Let $V_0=\{x-q=y-p=0\} \subset \CC^{4n}$ and note that the projection
$\pi: \mathcal{E}_{X \times Y} \rightarrow X\times Y$ maps $V_0 \cap
\mathcal{E}_{X \times Y}$ bijectively onto $I(X,Y)$. This puts a
scheme-structure on $I(X,Y)$ which we will use. Suppose that $X$ and
$Y$ satisfy (\ref{eq:assumptions}). If $I(X,Y)$ has multiple points or
components of higher dimension, they contribute to the total
multiplicity $\EDD{X}\EDD{Y}$ and makes $|I(X,Y)_0|$ smaller than this
number. Since $(X\times Y) \cap \Delta' \subseteq I(X,Y)$ where
$\Delta' \subset \CC^n \times \CC^n$ is the diagonal, $X \cap Y$
contributes to the total multiplicity and if $X$ and $Y$ intersect in
higher dimension the number of isolated points of $I(X,Y)$ is smaller
than $\EDD{X}\EDD{Y}$. The following example of varieties not
satisfying (\ref{eq:assumptions}) shows that $|I(X,Y)_0|$ can be
bigger than $\EDD{X}\EDD{Y}$.

\begin{ex}
Let $X \subset \CC^3$ be the line $ix_1-x_2=x_3=0$. In this case
$\EDD{X}=0$ and the point $X_{\infty}$ lies on the quadric $Q_{\infty}
\subset \PP^2$. Note that $T_xX=X$ and $(T_xX)^{\perp}=N_xX=\langle
X,(0,0,1)\rangle$ for all $x \in X$. Now let $Y \subset \CC^3$ be a
general line. The line $Y$ intersects $\langle X,(0,0,1)\rangle$ in
exactly one point $y \in Y$ and the plane $N_yY$ intersects $\langle
X,(0,0,1)\rangle$ in a line which intersects $X$ in a point $x \in
X$. Hence $I(X,Y)$ consists of one point $(x,y)$ but
$\EDD{X}\EDD{Y}=0$.
\end{ex}

The following theorem states that the homotopy presented in this paper
is optimal if $X,Y$ satisfy (\ref{eq:assumptions}), $I(X,Y)$ is finite
and non-singular and the bottleneck problem has no solutions at
infinity.

\begin{thm} \label{thm:count}
If $X,Y \subseteq \CC^n$ are smooth varieties satisfying
(\ref{eq:assumptions}), $I(X,Y)$ is finite and non-singular and the
bottleneck problem has no solutions at infinity,
then $$|I(X,Y)|=\EDD{X}\EDD{Y}.$$
\end{thm}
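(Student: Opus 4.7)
The plan is to realise $|I(X,Y)|$ as an intersection number in $\PP^{4n}$ by applying B\'ezout's theorem to $N$ and the projective closure of $V_0$, exploiting parts (1) and (4) of \lemmaref{lemma:EDDcorr}.

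First I would pass from $I(X,Y)$ to the affine slice $V_0 \cap \mathcal{E}_{X\times Y}$: by the observation immediately preceding the theorem, the projection $\mathcal{E}_{X\times Y} \to X\times Y$ restricts to a scheme-theoretic isomorphism between $V_0 \cap \mathcal{E}_{X\times Y}$ and $I(X,Y)$. Hence the hypotheses that $I(X,Y)$ is finite and non-singular translate into $V_0 \cap \mathcal{E}_{X\times Y}$ being a finite reduced scheme. Let $\bar{V}_0 \subset \PP^{4n}$ be the projective closure of $V_0$; it is a linear subspace of codimension $2n$, hence of degree $1$. Since $N$ has dimension $2n$, the expected dimension of $N \cap \bar{V}_0$ is $0$.

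The next step is to show that this intersection is proper and contained in the affine chart. By \lemmaref{lemma:EDDcorr}(4), the no-solutions-at-infinity hypothesis gives $N_{\infty}\cap \bar{V}_0 = \emptyset$, so set-theoretically $N \cap \bar{V}_0 = V_0 \cap \mathcal{E}_{X\times Y}$, which is finite by assumption. B\'ezout's theorem then yields
\[
\sum_{P \in N \cap \bar{V}_0} i(N,\bar{V}_0;P) = \deg{N}\cdot \deg{\bar{V}_0} = \EDD{X}\EDD{Y},
\]
using \lemmaref{lemma:EDDcorr}(1). Because $V_0 \cap \mathcal{E}_{X\times Y}$ is reduced, each local multiplicity equals $1$ and the sum collapses to $|I(X,Y)|$, completing the count.

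The main obstacle is precisely the control of the boundary contribution at infinity. Without the no-solutions-at-infinity assumption the count on the affine side could strictly drop below $\deg{N}$ and one would have to invoke an excess-intersection correction in the sense of Fulton \cite{F}; what \lemmaref{lemma:EDDcorr}(4) accomplishes is to rule this out, so that the full degree $\EDD{X}\EDD{Y}$ of $N$ is concentrated on $I(X,Y)$. A secondary point worth checking carefully is that the scheme-theoretic identification of $I(X,Y)$ with $V_0 \cap \mathcal{E}_{X\times Y}$ under the projection is compatible with multiplicities; this is essentially built into the definition of the scheme structure on $I(X,Y)$ given just before the statement, but it should be stated explicitly.
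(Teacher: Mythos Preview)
Your argument is correct and follows essentially the same route as the paper: intersect the closure $N$ with the linear space $L=\bar{V}_0=\{x-q=y-p=0\}\subset\PP^{4n}$, use \lemmaref{lemma:EDDcorr}(4) to rule out intersection points on $N_\infty$, and then invoke \lemmaref{lemma:EDDcorr}(1) together with B\'ezout to conclude. The paper's proof is a three-line sketch of exactly this; you have simply been more explicit about the scheme-theoretic identification of $I(X,Y)$ with $V_0\cap\mathcal{E}_{X\times Y}$ and about why reducedness forces all local intersection multiplicities to equal $1$.
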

\begin{proof}
Let $L=\{x-q=y-p=0\} \subset \PP^{4n}$. We have assumed that the
intersection $L \cap \mathcal{E}_{X\times Y}$ is transversal. By
\lemmaref{lemma:EDDcorr}, $L \cap N_{\infty} = \emptyset$ and
$|I(X,Y)|=\deg{N}=\EDD{X}\EDD{Y}$.
\end{proof}

\subsection{Reformulation in terms of classical invariants}
Let $X \subseteq \CC^n$ be a smooth subvariety satisfying the
assumptions (\ref{eq:assumptions}). Then $\EDD{X}$ may be expressed in
terms of the degrees $\mu_i(\bar{X})$ of the so-called polar classes
of $\bar{X}$. In fact, as is shown in \cite{DHOST} Theorem
6.11, $$\EDD{X}=\sum_{i=0}^d \mu_i(\bar{X}),$$ where
$d=\dim{X}$. Since $\bar{X}$ is smooth, this may be expressed in terms
of the degrees $c_i$ of the Chern classes of the tangent bundle of
$\bar{X}$: $$\EDD{X} = \sum_{i=0}^d(-1)^i \cdot (2^{d+1-i}-1)c_i.$$
See for example \cite{F} Example 14.4.15 for the definition of polar
classes and their relationship to Chern classes. As is explained in
\cite{P}, another way to phrase this is in terms of the degree of the
top Segre class of the Euclidean normal bundle:
$\EDD{X}=\deg{s_d(\N{X})}$.

\begin{ex} \label{ex:surfaces}
If $X \subseteq \CC^3$ is a general surface of degree $d$, then
$\EDD{X}=d^3-d^2+d$. Hence, if $X,Y \subseteq \CC^3$ are general
surfaces of degree $d$ and
$e$, $$|I(X,Y)_0|=(d^3-d^2+d)(e^3-e^2+e)-c,$$ where $c$ is the
contribution from $X\cap Y$. Based on experiments, it seems that
$c=de(d+e-1)$.
\end{ex}

\section{The homotopy} \label{sec:homotopy}
In this section we present the homotopy. As mentioned in the
introduction, the start points of the homotopy are built from
combining points in the normal loci of $X$ and $Y$ with respect to a
general point $p_0 \in \CC^n$. This will be explained in detail below
but for now let $p_0 \in \CC^n$ be a general point.

Suppose that $X \subset \CC^n$ is a smooth variety defined by an ideal
$(F_1,\dots,F_a) \subseteq \CC[x_1,\dots,x_n]$ and that $Y \subset
\CC^n$ is a smooth variety defined by $(G_1,\dots,G_b)$. For the sake
of presentation, assume for now that $X$ and $Y$ are complete
intersections and that $a=\codim{X}$ and $b=\codim{Y}$. The general
case will be addressed below. Note that the set of lines in a fiber
$\Ns{x}{X}$ that pass through the base point $x \in X$ may be viewed
as $\PP^{a-1}$. There is a map $X \times \PP^{a-1} \rightarrow
\PP^{n-1}$ which sends $(x,l)$, where $x \in X$ and $l$ is a line in
$\Ns{x}{X}$ through $x$, to the direction of $l$. Similarly there is a
corresponding map $Y \times \PP^{b-1} \rightarrow \PP^{n-1}$ for
$Y$. We will express these maps using the defining equations as
follows. Let $J_F$ and $J_G$ be the Jacobian matrices of
$F=(F_1,\dots,F_a)$ and $G=(G_1,\dots,G_b)$. Then, the normal line
maps described above are induced by the maps $f:X \times \CC^a
\rightarrow \CC^n:(x,v)\mapsto J_F(x)^Tv$ and $g:Y \times \CC^b
\rightarrow \CC^n:(y,w) \mapsto J_G(y)^Tw$. The homotopy is given by
the following equations:
\begin{equation} \label{eq:homocomplex}
\begin{array}{c}
  F(x)=0, \\
  G(y)=0, \\
  s(x-p_0)+(1-s)(x-y)-f(x,v)=0, \\
  s(y-p_0)+(1-s)(y-x)-g(y,w)=0,
\end{array}
\end{equation}
for $(x,y,v,w,s) \in \CC^n \times \CC^n \times \CC^a \times \CC^b
\times \CC$.

To define the path $\alpha:[0,1] \rightarrow \CC$ described in
\secref{sec:homomethods} we introduce a general parameter $\gamma \in
S^1 \subset \CC$ and use what in \cite{SW} is called the gamma
trick. Based on the gamma trick we may use the path $\alpha: [0,1]
\rightarrow \CC$, $\alpha(t) = \gamma t / (1+(\gamma-1)t)$, which for
a general $\gamma \in S^1$ ensures sufficient generality and avoids
degeneracy. Letting $s=\alpha(t)$ in (\ref{eq:homocomplex}) we get
\begin{equation} \label{eq:main}
\begin{array}{c}
  F(x)=0, \\
  G(y)=0, \\
  \gamma t(x-p_0)+(1-t)(x-y)-(\gamma t +1-t)\cdot f(x,v)=0, \\
  \gamma t(y-p_0)+(1-t)(y-x)-(\gamma t +1-t)\cdot g(y,w)=0,
\end{array}
\end{equation}
for $(x,y,v,w,t) \in \CC^n \times \CC^n \times \CC^a \times \CC^b
\times [0,1]$.

\begin{rem}
Note that (\ref{eq:main}) with $t=0$ is the system we get if we apply
Lagrange multipliers in the real setting to solve the optimization
problem of minimizing or maximizing $||x-y||^2$ under the constraints
$F(x)=G(y)=0$. We will compare our approach to more direct ways of
solving this system in \secref{sec:complexity}.
\end{rem}

\subsection{Equations for the start system} \label{sec:start}
A system of equations for the start points with respect to a point $p_0
\in \CC^n$ is given by $$F(x)=0, \; (x-p_0)=f(x,v),$$ in the case of $X$
and similarly for $Y$. These equations may be homogenized with respect
to the variables $v \in \CC^a$ to yield a system homogeneous and
linear in the these variables. The start points are $S_1 \times S_2$
where
\begin{equation} \label{eq:start}
  \begin{array}{l}
  S_1=\{(x,v) \in X \times \CC^a: (x-p_0)=f(x,v)\}\\
  S_2=\{(y,w) \in Y \times \CC^b: (y-p_0)=g(y,w)\}.
  \end{array}
\end{equation}
Since we have ordered the coordinates of the homotopy as $(x,y,v,w)$,
this has to be read as $S_1 \times S_2 = \{(x,y,v,w):(x,v) \in S_1, \;
(y,w) \in S_2\}$.

\subsection{Effects of squaring the system} \label{sec:squaring}
If $X$ is defined by more than $a=\codim{X}$ equations we need to
square the system which we do by replacing defining equations
$(\hat{F}_1,\dots,\hat{F}_r)$ for $X$ by $a$ general linear
combinations $F=(F_1,\dots,F_a)$ of $(\hat{F}_1,\dots,\hat{F}_r)$.
Similarly, given defining equations $(\hat{G}_1,\dots,\hat{G}_s)$ for
$Y$ such that $s > b=\codim{Y}$, these are replaced by $b$ general
linear combinations $G=(G_1,\dots,G_b)$ of
$(\hat{G}_1,\dots,\hat{G}_s)$. The homotopy is then given by
(\ref{eq:main}) applied to the systems $F$ and $G$. In the case $X=Y$
defined by $(\hat{F}_1,\dots,\hat{F}_r)$ we may take $F=G$. This is
convenient as there is no need to solve the start system twice in this
case.

The squaring of the system has the effect that $F$ and $G$ define some
subschemes $X',Y' \subseteq \CC^n$ with $X \subseteq X'$ and $Y
\subseteq Y'$. Suppose that $X, Y \neq \emptyset$. By Bertini's
theorem, $X' \times Y'$ is equidimensional of dimension
$\dim{X}+\dim{Y}$ and $X\times Y$ is an irreducible component of $X'
\times Y'$. Let $\Sigma_1=\{x \in X: p_0 \in \Ns{x}{X}\}$ and
$\Sigma_2=\{y \in Y: p_0 \in \Ns{y}{Y}\}$. We may assume that the
finite subsets $\Sigma_1 \times \Sigma_2$ and $I(X,Y)_0$ of $X\times
Y$ are inside the smooth locus of $X' \times Y'$. It follows that
$(\Sigma_1 \times \Sigma_2) \cap W = I(X,Y)_0 \cap W = \emptyset$ for
any irreducible component $W \subseteq X' \times Y'$ other than
$X\times Y$. In particular, this means that every isolated point of
$I(X,Y)$ lifts to an isolated solution of the squared system
(\ref{eq:homocomplex}) over $s=0$.

Considering the equations for the start system in the general case of
over determined systems, we can still use the equations given by
$F(x), (x-p_0)-f(x,v)$ and $G(y), (y-p_0)-g(y,w)$
described in \secref{sec:start}. However, we must pick only points
$(x,y,v,w) \in X \times Y \times \CC^a \times \CC^b$ as in
(\ref{eq:start}). Since the equations defining $X$ and $Y$ are part of
the input, this can be done by a simple filtering process by
evaluating the defining equations on tentative start points. One may
also solve a corresponding over determined system with standard
homotopy methods. Either way, the since start point computation is of
lower order complexity it is not the focus of this paper.

\subsection{Isolated points of the bottleneck problem} \label{sec:main}
Let $S_1$ and $S_2$ be as in (\ref{eq:start}). Then $|S_1| = \EDD{X}$
and $|S_2|=\EDD{Y}$. Hence the number of start points $S_1 \times S_2$
is $\EDD{X}\EDD{Y}$.

\begin{thm} \label{thm:paths}
Let $X,Y \subset \CC^n$ be smooth subvarieties that satisfy
(\ref{eq:assumptions}). The homotopy (\ref{eq:main}) has
$\EDD{X}\EDD{Y}$ solution paths starting at $S_1 \times S_2$ whose
endpoints after projection to $\CC^{2n}$ include all the isolated
points of $I(X,Y)$. The paths are non-singular except possibly at the
end points.
\end{thm}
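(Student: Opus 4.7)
The plan is to identify the homotopy (\ref{eq:main}) with the moving-slice family $\mathcal{E}_{X\times Y}\cap V_{\alpha(t)}(p_0)$ of \lemmaref{lemma:EDDcorr}, and then apply that lemma together with the gamma trick and standard parameter-homotopy arguments. First I would rewrite the last two equations of (\ref{eq:main}) by setting $p=(1-s)y+sp_0$ and $q=(1-s)x+sp_0$ with $s=\alpha(t)$; they become $x-p=J_F(x)^Tv$ and $y-q=J_G(y)^Tw$. On the smooth locus of $X'\times Y'$, where the rows of $J_F(x)$ and $J_G(y)$ span the normal spaces, these say exactly that $(x,y,p,q)\in\mathcal{E}_{X'\times Y'}\cap V_s(p_0)$, with $(v,w)$ recovered uniquely from $(x-p,y-q)$. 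Thus solution points of (\ref{eq:main}) in the smooth locus are in bijection with intersection points of the moving slice, and this bijection is a local isomorphism preserving transversality.

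For the path count, at $t=1$ we have $s=1$ and $V_1(p_0)=\{p=q=p_0\}$, so the system decouples into the two normal-locus problems defining $S_1$ and $S_2$. By \lemmaref{lemma:EDDcorr}(3), together with the squaring analysis of \secref{sec:squaring} which confines the isolated solutions in the smooth locus to $X\times Y\subseteq X'\times Y'$, the start set is exactly $S_1\times S_2$ with $|S_1\times S_2|=\EDD{X}\EDD{Y}$. For non-singularity, \lemmaref{lemma:EDDcorr}(2) gives transversality of $\mathcal{E}_{X\times Y}\cap V_s(p_0)$ at $s=1$ and for $s$ in a Zariski open subset of $\CC$, so the complementary bad set $B\subset\CC$ is finite. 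For general $\gamma\in S^1$, the circular arc $\alpha([0,1])$ from $0$ to $1$ avoids the finite set $B\setminus\{0,1\}$, hence the Jacobian of (\ref{eq:main}) in $(x,y,v,w)$ has full rank along each solution path for all $t\in(0,1]$.

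It remains to show that the endpoints include every point of $I(X,Y)_0$. By \secref{sec:squaring}, each isolated point of $I(X,Y)$ lifts uniquely to an isolated solution of (\ref{eq:main}) at $t=0$, since on the smooth locus $v$ and $w$ are recovered from $x-y$ and $y-x$ by inverting $J_F^T$ and $J_G^T$ on their column spaces. Because the fiber count over $t\in\alpha^{-1}(\CC\setminus B)$ is the constant $\EDD{X}\EDD{Y}$, the standard conservation-of-number principle for parameter homotopies (see \cite{SW}) guarantees that every isolated point of the fiber at $t=0$ is the limit, as $t\to 0^+$, of at least one path starting in $S_1\times S_2$. The remaining paths either accumulate on positive-dimensional components of the $t=0$ fiber (for instance the lift of $X\cap Y$) or diverge to infinity.

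The main obstacle is the last paragraph: one needs a parameter-homotopy theorem that applies despite the squaring introducing excess components, and one must pass to the projective compactification $N\subset\PP^{4n}$ from \lemmaref{lemma:EDDcorr} to ensure properness, so that any path failing to converge to a finite isolated limit can be charged to either an excess component or a solution at infinity, never to a missed isolated bottleneck.
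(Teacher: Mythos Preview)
Your approach is essentially the paper's: reparametrize via $p=(1-s)y+sp_0$, $q=(1-s)x+sp_0$, identify solutions of (\ref{eq:main}) with points of $\mathcal{E}_{X\times Y}\cap V_s(p_0)$, and invoke \lemmaref{lemma:EDDcorr} together with the gamma trick and the parameter-homotopy theorem from \cite{SW}. The one step the paper makes explicit that you only gesture at is the control of the squaring for \emph{generic} $s$: the paper sets $U_0=\CC^n\times\CC^n\setminus\overline{X'\times Y'\setminus X\times Y}$ and $U=U_0\times\CC^{a+b}$, notes that $\mathcal{E}_{X\times Y}\cap V_1(p_0)\subseteq\pi^{-1}(U_0)$, and concludes by semicontinuity that $\mathcal{E}_{X\times Y}\cap V_s(p_0)\subseteq\pi^{-1}(U_0)$ for generic $s$ as well. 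This single observation is what makes the generic-fiber count \emph{inside $U$} equal to $\EDD{X}\EDD{Y}$ and allows a direct appeal to \cite{SW} Theorem~7.1.6.

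Consequently your final paragraph over-worries. There is no need to pass to the compactification $N\subset\PP^{4n}$ or to argue properness for this theorem; the openness argument above, done entirely inside $\CC^{4n}$, already guarantees that every isolated point of the $t=0$ fiber lying in $U$ is an endpoint, and by \secref{sec:squaring} all of $I(X,Y)_0$ lifts into $U$. (A small slip: in your first paragraph you write $\mathcal{E}_{X'\times Y'}$; the correspondence you want is $\mathcal{E}_{X\times Y}$, with the restriction to $U$ handling the extra components of $X'\times Y'$.)
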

\begin{proof}
Using the notation of \secref{sec:squaring}, $X\times Y$ is a
component of the subscheme $X'\times Y' \subset \CC^n \times \CC^n$
defined by the system $(F,G)$. Let $U_0 = \CC^n \times \CC^n \setminus
\overline{X'\times Y' \setminus X\times Y}$ and $U=U_0 \times
\CC^{a+b}$. By \secref{sec:squaring}, we may assume that the isolated
points of $I(X,Y)$ are contained in $U_0$ and that $S_1\times S_2
\subset U$. For $s=1$, the set of solutions to (\ref{eq:homocomplex})
contained in $U$ is equal to $S_1\times S_2$ and those solutions are
smooth. Since $|S_1\times S_2|=\EDD{X}\EDD{Y}$, we have to show that
for general $s \in \CC$, the number of solutions to
(\ref{eq:homocomplex}) contained in $U$ is equal to
$\EDD{X}\EDD{Y}$. Once this is established, the statement follows from
the general theory of parameter homotopies, see for example \cite{SW}
Theorem 7.1.6 together with the gamma trick \cite{SW} Lemma 7.1.3.

Using the notation of \secref{sec:EDD}, let $\mathcal{E}_{X\times Y}$
be the ED-correspondence, $V_s(p_0)$ the family of linear spaces
defined there and let $\pi: \mathcal{E}_{X\times Y} \rightarrow
\CC^n\times \CC^n$ be the projection $(x,y,p,q) \mapsto (x,y)$. For
generic $s \in \CC$, there is a one-to-one correspondence between the
solutions to (\ref{eq:homocomplex}) contained in $U$ and
$\mathcal{E}_{X\times Y} \cap V_s(p_0) \cap \pi^{-1}(U_0)$. It is
given by the map $\CC^n \times \CC^n \times \CC^a \times \CC^b
\rightarrow \CC^{4n}$ defined by
  \[
    (x,y,v,w)
    \mapsto
  \begin{pmatrix}
    x\\y\\(1-s)y+sp_0\\(1-s)x+sp_0
    \end{pmatrix}.
  \]
Since $\mathcal{E}_{X\times Y} \cap V_1(p_0) \subseteq \pi^{-1}(U_0)$
we have that $\mathcal{E}_{X\times Y} \cap V_s(p_0) \subseteq
\pi^{-1}(U_0)$ for generic $s \in \CC$ as well. By
\lemmaref{lemma:EDDcorr}, $|\mathcal{E}_{X\times Y} \cap V_s(p_0)|$ is
finite and equal to $\EDD{X}\EDD{Y}$ for generic $s \in \CC$.
\end{proof}

\begin{coro} \label{coro:bound1}
Let $X,Y \subset \CC^n$ be smooth varieties that satisfy
(\ref{eq:assumptions}). Then $|I(X,Y)_0| \leq \EDD{X}\EDD{Y}$.
\end{coro}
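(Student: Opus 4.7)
The plan is that this corollary is essentially an immediate consequence of \thmref{thm:paths}, so the ``proof'' is really just bookkeeping. I would simply invoke the theorem and count endpoints.

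More precisely, the first step is to apply \thmref{thm:paths} to obtain a collection of exactly $\EDD{X}\EDD{Y}$ solution paths of the homotopy (\ref{eq:main}) originating at the start set $S_1 \times S_2$. The second step is to observe that each solution path, when projected to $\CC^{2n}$, has at most one endpoint, so the image under projection of the set of endpoints of convergent paths is a finite set of cardinality at most $\EDD{X}\EDD{Y}$. The third step is to recall from the theorem that every element of $I(X,Y)_0$ arises as such a projected endpoint; hence $I(X,Y)_0$ injects into this finite set, yielding $|I(X,Y)_0| \leq \EDD{X}\EDD{Y}$.

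There is no real obstacle to overcome here, since all the substantive work — the construction of the homotopy, the choice of start points, the squaring procedure, the handling of components of $X'\times Y'$ other than $X \times Y$, and the verification via \lemmaref{lemma:EDDcorr} that generic fibers have exactly $\EDD{X}\EDD{Y}$ solutions in the relevant open set $U$ — has already been carried out in the proof of \thmref{thm:paths}. The only thing worth flagging in the write-up is that multiple paths may converge to the same endpoint (for instance at points of $I(X,Y)$ of higher multiplicity, or at points on positive-dimensional components of $I(X,Y)$), but this only strengthens the inequality, since the bound counts paths rather than distinct limits.
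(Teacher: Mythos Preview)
Your proposal is correct and matches the paper's approach: the corollary is stated immediately after \thmref{thm:paths} with no separate proof, so it is intended as a direct consequence of that theorem, exactly as you argue by bounding $|I(X,Y)_0|$ by the number of homotopy paths.
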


\begin{rem}
If $X=Y$ with $\dim{X}>0$ it is enough to form the start points from
all pairs $(s_1,s_2)$ with $s_1,s_2 \in S_1=S_2$, $s_1\neq s_2$ and
without taking the order into account. We can still generate all the
isolated points of $I(X,X)$ from the end points of the solution paths
using the action $(x,y) \mapsto (y,x)$ on $I(X,X)$. This is because
the homotopy is in this case invariant under the symmetry $(x,y,v,w)
\mapsto (y,x,w,v)$. Of course, $(x,y) \in I(X,X)$ and $(y,x) \in
I(X,X)$ with $x\neq y$ represent the same bottleneck, which is what
we are really interested in. Any start point that is fixed under the
symmetry, that is it comes from a pair $(s_1,s_1)$ with $s_1=(x,v) \in
S_1$, can be discarded as the corresponding solution path projected to
$X \times X$ is constant with value $(x,x)$. But $(x,x) \in I(X,X)$ is
not isolated, unless $X$ is a point.
\end{rem}

\begin{coro} \label{coro:bound2}
Let $X \subset \CC^n$ be a smooth variety that satisfies
(\ref{eq:assumptions}). Then the number of isolated bottlenecks to
$X$ is bounded by ${\EDD{X} \choose 2}$.
\end{coro}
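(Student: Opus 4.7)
The plan is to apply \thmref{thm:paths} with $Y = X$ and exploit the $\ZZ/2$-symmetry discussed in the remark preceding this corollary to sharpen \cororef{coro:bound1} by a factor of two. When $F = G$, the homotopy (\ref{eq:main}) is invariant under the involution $\sigma:(x,y,v,w,t) \mapsto (y,x,w,v,t)$, so $\sigma$ sends solution paths to solution paths and swaps the start point $(s_1,s_2) \in S_1 \times S_1$ with $(s_2,s_1)$; projected to $X \times X$, the endpoints of $\sigma$-related paths are the swapped pairs $(x,y)$ and $(y,x)$, which represent the same bottleneck.

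I would then partition the $\EDD{X}^2$ start points into the $\EDD{X}$ diagonal points $(s_1,s_1)$, fixed by $\sigma$, and the $\binom{\EDD{X}}{2}$ orbits of size two. For a diagonal start point, the preceding remark observes that the solution path projects to the constant $(x,x) \in X \times X$, which is not a bottleneck since it has $x = y$. Hence no bottleneck arises from a diagonal path.

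Finally, by \thmref{thm:paths}, every isolated bottleneck $(x,y) \in I(X,X)_0$ (with $x\neq y$) is the projection of some path endpoint, and by the symmetry $(y,x)$ is the projection of the endpoint of the $\sigma$-related path; hence each bottleneck is accounted for by a single off-diagonal orbit, giving the desired bound $\binom{\EDD{X}}{2}$. The one step I expect to require care is verifying that diagonal paths really stay on the diagonal: substituting $x=y$ and $v=w$ into (\ref{eq:main}) collapses the last two equations to a single equation and produces a well-defined sub-homotopy on the diagonal through $(s_1,s_1)$ at $t=1$, and uniqueness of path tracking then pins the full solution path to this diagonal continuation.
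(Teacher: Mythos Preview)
Your proposal is correct and follows essentially the same route as the paper, which leaves the proof implicit in the remark immediately preceding the corollary: exploit the $\sigma$-symmetry of the homotopy when $F=G$, discard the $\EDD{X}$ diagonal start points because their paths project to constant diagonal pairs $(x,x)$ (hence never yield a bottleneck with $x\neq y$), and note that the remaining ${\EDD{X}\choose 2}$ unordered off-diagonal pairs of start points suffice to hit every isolated bottleneck via \thmref{thm:paths}. Your closing verification that diagonal paths stay diagonal is the one detail the paper's remark asserts without argument; your uniqueness-of-path-tracking justification (equivalently, applying $\sigma$ to the path and invoking uniqueness from the same start point) is exactly the right way to fill it in.
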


\section{Linear projections} \label{sec:projections}

In applications it is often relevant to consider smooth maps
$\pi:\CC^n \rightarrow \CC^m$ where $m<n$ such that $\pi_{|X}$ and
$\pi_{|Y}$ are embeddings of $X$ and $Y$ with smaller codimension. For
example, algorithms in the spirit of marching cubes \cite{CL} are
sensitive to the dimension of the ambient space $n$ and are typically
exponential in $n$. For these reasons it is useful to have a
formulation of the homotopy that computes the isolated points of
$I(\pi(X),\pi(Y))$ rather than $I(X,Y)$. The assumption is that we
have equations for $X$ and $Y$ and that the map $\pi$ is given
although equations for $\pi(X)$ and $\pi(Y)$ might not be
available. On $\CC^n$ and $\CC^m$ we have bilinear forms $\langle
\cdot, \cdot \rangle$ induced by the standard scalar products on
$\RR^n$ and $\RR^m$. For $x \in X$ consider the differential
$d\pi_x:T_x\CC^n \rightarrow T_{\pi(x)}\CC^m$ which induces as
isomorphism between the embedded tangent spaces $T_xX \rightarrow
T_{\pi(x)}\pi(X)$, seen as subspaces of $T_x\CC^n \cong \CC^n$ and
$T_{\pi(x)}\CC^m \cong \CC^m$. For $x \in X$, $p_0 \in \CC^n$ and $z
\in T_xX$ we have that $\langle \pi(x)-\pi(p_0), d\pi_x(z)
\rangle=\langle \adj{d\pi_x}(\pi(x)-\pi(p_0)),z \rangle$ where
$\adj{d\pi_x}$ is the adjoint map. To find the normal locus of
$\pi(X)$ with respect to $\pi(p_0)$ we may thus compute the set of
points $x \in X$ such that $\adj{d\pi_x}(\pi(x)-\pi(p_0)) \perp
T_xX$. In a similar fashion we get the equations of the main homotopy
in this setting:
\[
\begin{array}{c}
  F(x) =0,\\ G(y) =0,\\ \adj{d\pi_x}[\gamma
    t(\pi(x)-\pi(p_0))+(1-t)(\pi(x)-\pi(y))]-(\gamma t +1-t) \cdot
  f(x,v)=0, \\ \adj{d\pi_y}[\gamma
    t(\pi(y)-\pi(p_0))+(1-t)(\pi(y)-\pi(x))]-(\gamma t +1-t) \cdot
  g(y,w)=0.
\end{array}
\]

Now assume that $\pi:\CC^n \rightarrow \CC^m$ is a linear map whose
restriction to $X$ and $Y$ are embeddings. In the case of a smooth
variety $X=Y$, these assumptions are met by a general linear map
$\CC^n \rightarrow \CC^m$ where $m=2\cdot \dim{X}+1$ and a simple way
to reduce ambient dimension in practice in case $n>2 \cdot \dim{X}+1$
is to use a random linear map. In \exref{ex:curves} we will consider
real curves in higher codimension and project these to $\RR^3$ for
visualization purposes. In the case where $\pi$ is linear and
represented by an $m\times n$-matrix $M$, the homotopy above becomes:

\[
\begin{array}{c}
  F(x)=0, \\
  G(y)=0, \\
  M^TM(\gamma t(x-p_0)+(1-t)(x-y))-(\gamma t +1-t) \cdot f(x,v)=0, \\
  M^TM(\gamma t(y-p_0)+(1-t)(y-x))-(\gamma t +1-t) \cdot g(y,w)=0.
\end{array}
\]

\section{Examples} \label{sec:examples}

For the implementation of the examples below we have used
\emph{Bertini} \cite{bertini} and \emph{Macaulay2} \cite{M2} with the
package \cite{M2bertini}. The timings were done using a 2.8 GHz
processor of type Intel i7-2640M. More elaborate timings are performed
in \secref{sec:complexity}. The plotting was done using
\emph{Matplotlib} \cite{matplotlib}.

\begin{ex} \label{ex:curves}
Consider a complete intersection curve $X \subseteq \RR^n$ defined by
$n-2$ random polynomials of degree 2 in $\RR[x_1,\dots,x_n]$ and the a
hyperellipsoid $\sum_{i=1}^n r_ix_i^2-1$ where $0 \leq r_1 \leq 1$ are
random. The reason for including the hyperellipsoid is to ensure
compactness of $X$. In addition to this we require that the random
quadrics pass through a random point on the hyperellipsoid, this is to
ensure that $X$ is not empty. The method to compute bottlenecks
described above was performed with both input varieties equal to
$X_{\CC}$, with $X$ as above for $n=2$, $n=3$, $n=4$ and
$n=7$. \figref{fig:curves} displays plots of some of the results. For
visualization purposes we have performed the method subject to a
random orthogonal projection to $\RR^3$ in the cases $n>3$ as in
\secref{sec:projections}. In the figure, the computed bottlenecks
are represented by a pair of red points on $X$ together with the line
segment that joins them. We have also sampled $X$ (blue points) to
visualize the curve. Plots without the normal lines are also included.

To get a sense of the timing, in the case of $n=4$ the start point
homotopy follows 32 paths and the main homotopy follows 496 paths. The
computation takes 2-3 seconds in this case. In the case $n=7$ the
start homotopy follows 448 paths and the main homotopy follows 100128
paths. The number of real bottlenecks can however be quite small,
in the random example tested it was only 6. The computation takes
20-25 minutes in this case.
\begin{figure}[ht]
\centering
\caption{Plots of complete intersection curves.}
\label{fig:curves} 
\subfloat[][A conic in $\RR^2$.]{
  \includegraphics[trim={2cm 3cm 2cm 3cm},clip,scale=0.3]
                  {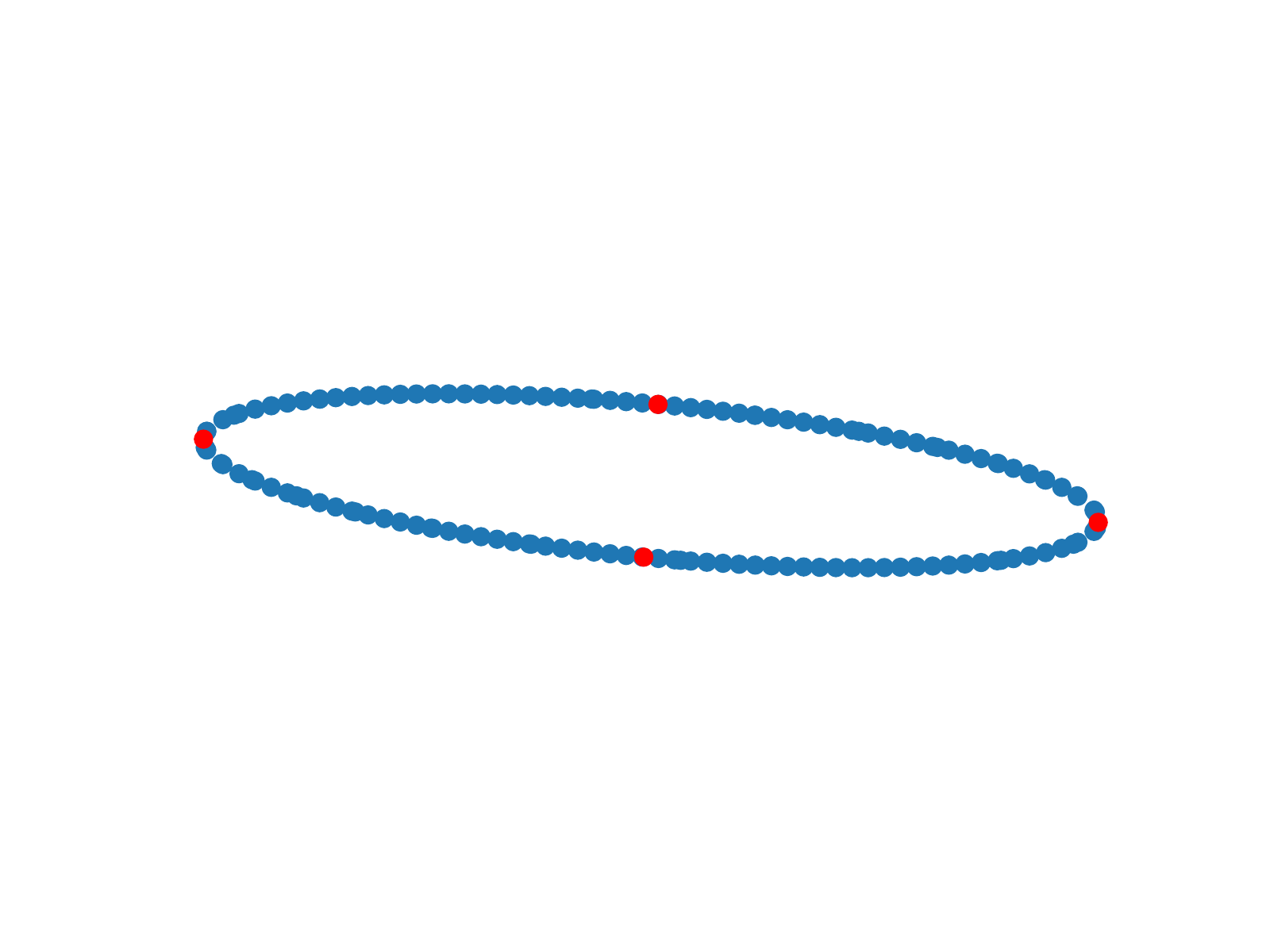}}
\qquad
\subfloat[][Bottlenecks.]{
  \includegraphics[trim={2cm 3cm 2cm 3cm},clip,scale=0.3]
                  {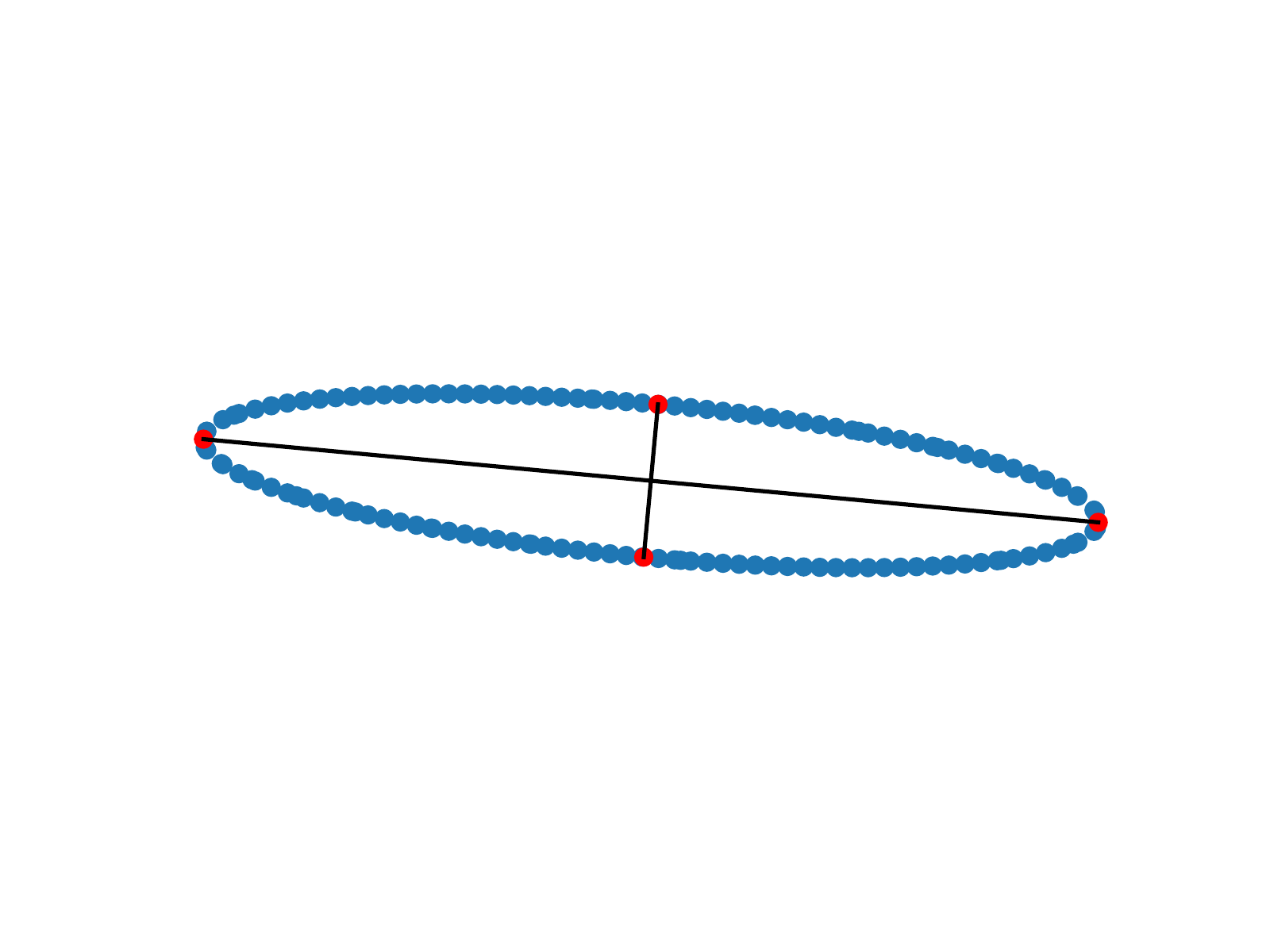}}
\qquad
\subfloat[][A curve in $\RR^3$.]{
  \includegraphics[trim={2cm 3cm 2cm 2cm},clip,scale=0.38]
                  {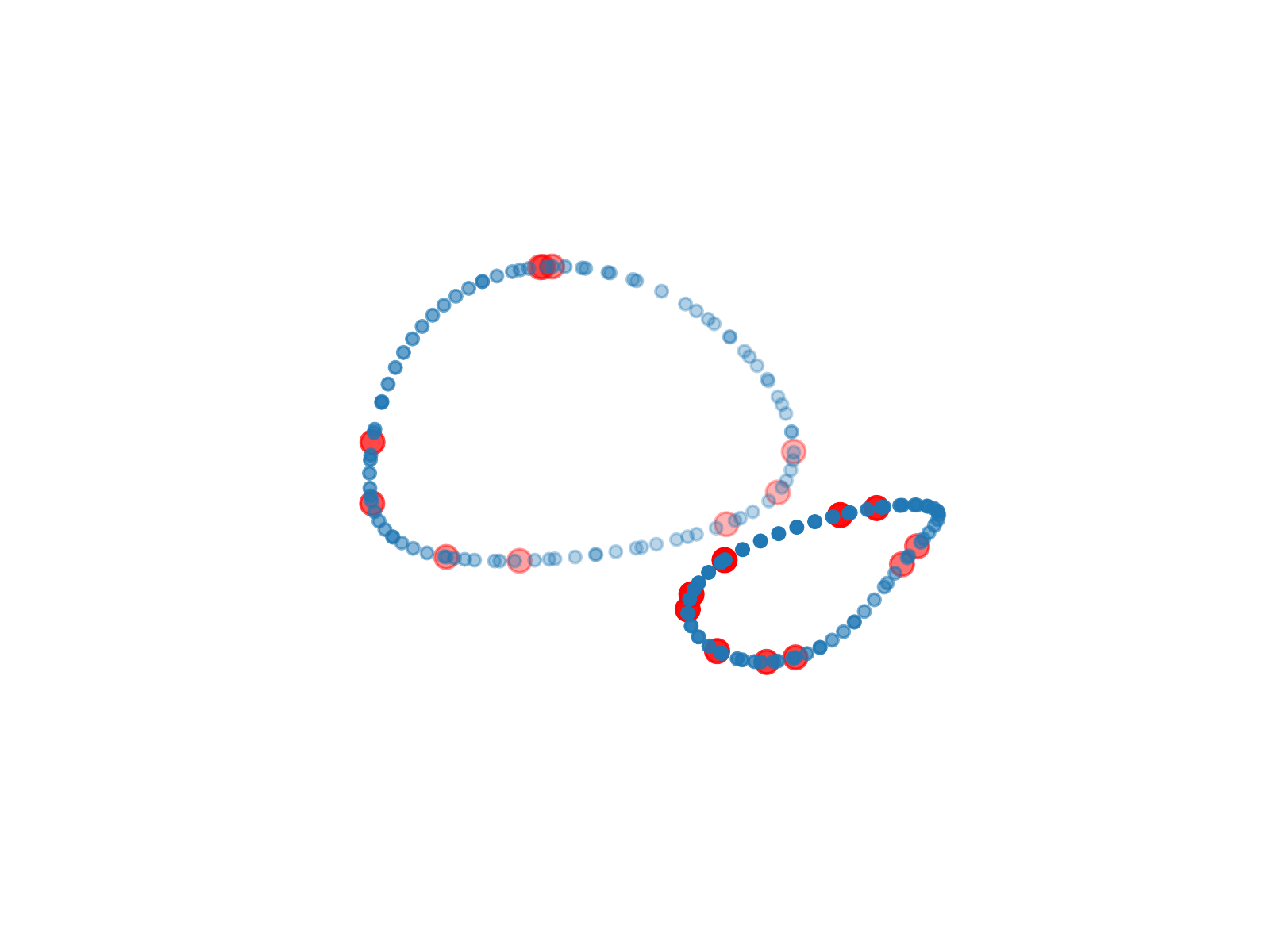}}
\qquad
\subfloat[][Bottlenecks.]{
  \includegraphics[trim={2cm 3cm 2cm 2cm},clip,scale=0.38]
                  {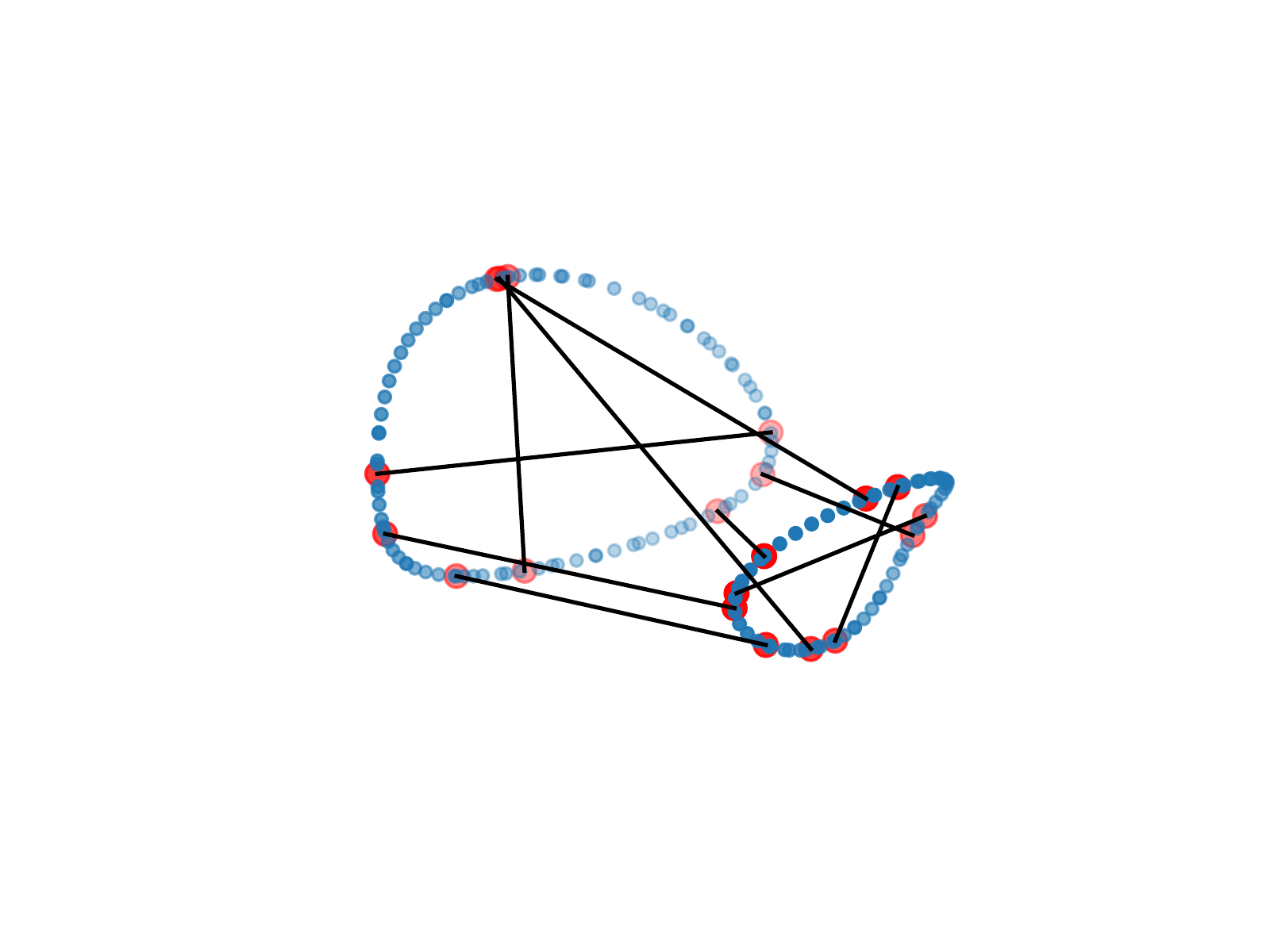}}
\qquad
\subfloat[][A curve projected from $\RR^4$.]{
  \includegraphics[trim={2cm 3cm 2cm 2cm},clip,scale=0.35]
                  {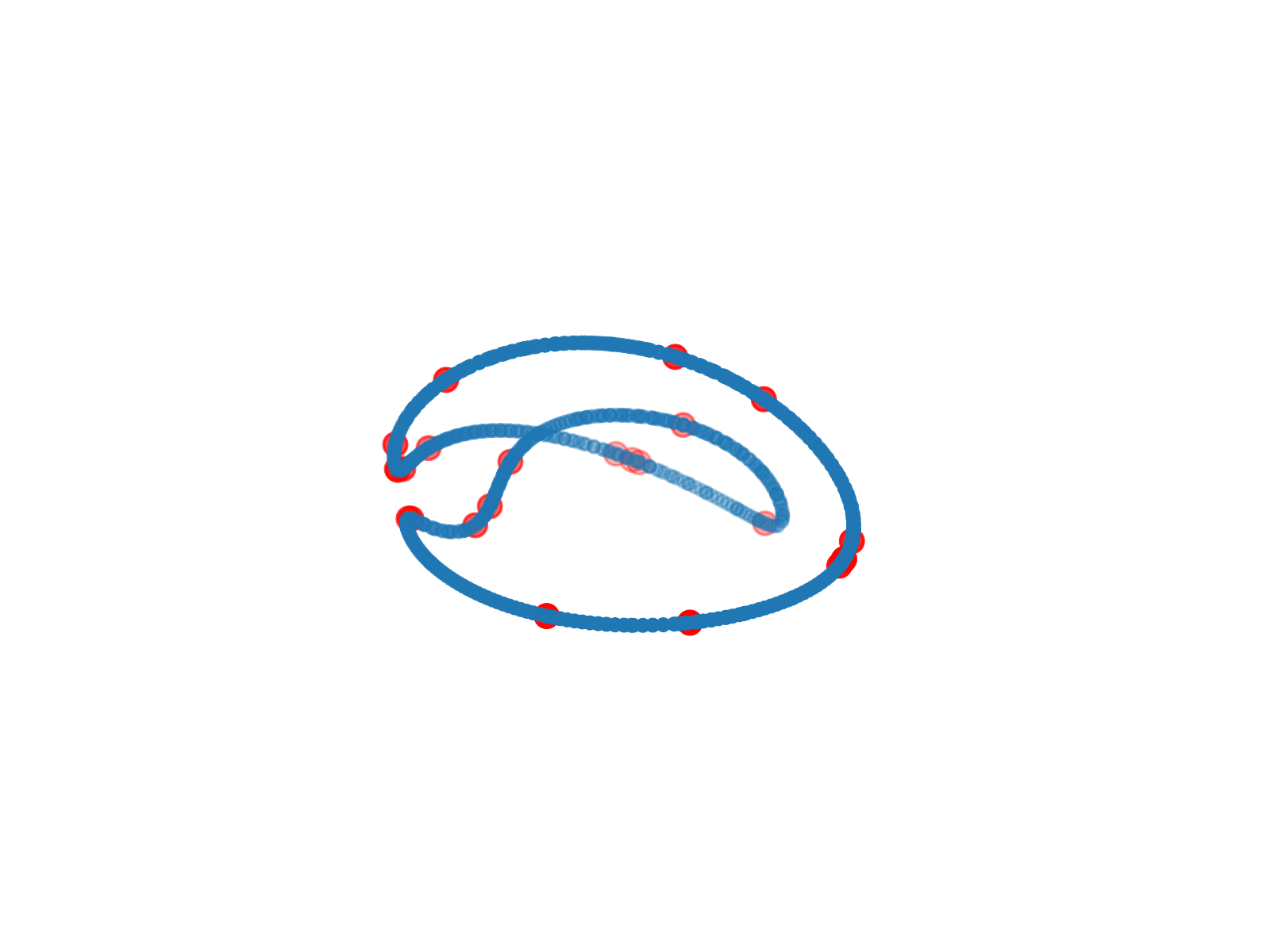}}
\qquad
\subfloat[][Bottlenecks.]{
  \includegraphics[trim={2cm 3cm 2cm 2cm},clip,scale=0.35]
                  {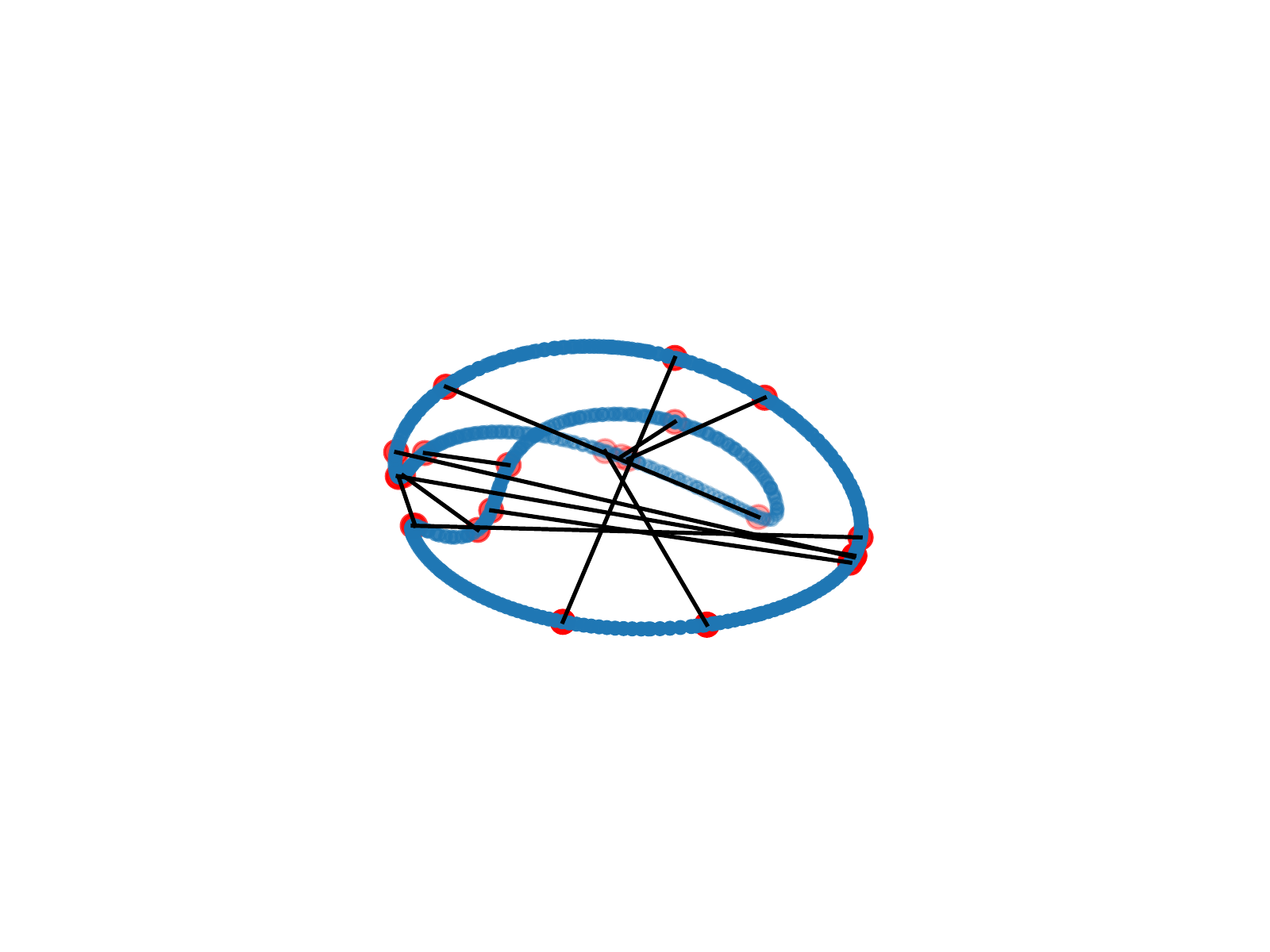}}
\end{figure}
\end{ex}

\exref{ex:curves} in a sense exemplifies the worst case scenario as
the start point homotopy for these random complete intersection curves
is optimal and has as many paths as there are solutions to the start
system (this follows from the discussion in \secref{sec:complexity},
see also \cite{DHOST} Corollary 2.10). More benefit from the method is
obtained in cases where structure is detected during the start system
run, embodied in the fact that the start system has fewer solution
than expected. This in turn means that fewer paths have to be followed
for the main homotopy. This is illustrated in \secref{sec:complexity}.

\begin{ex}
The method to compute bottlenecks was performed on the
complexification of the Goursat surface in $\RR^3$ defined
by $$x^4+y^4+z^4+(x^2+y^2+z^2)^2-2(x^2+y^2+z^2)-3=0.$$ The start point
homotopy follows 108 paths and there are 52 solutions. This means that
1326 paths are followed by the main homotopy. There are 13 real
bottlenecks which are plotted together with a sampling of the
surface in \figref{fig:goursat}. In total the computation takes about
6 seconds.
\begin{figure}[ht]
     \centering
     \caption{A surface in $\RR^3$.}
     \label{fig:goursat}
     \subfloat[][A quartic surface in $\RR^3$.]{
       \includegraphics[scale=0.3]{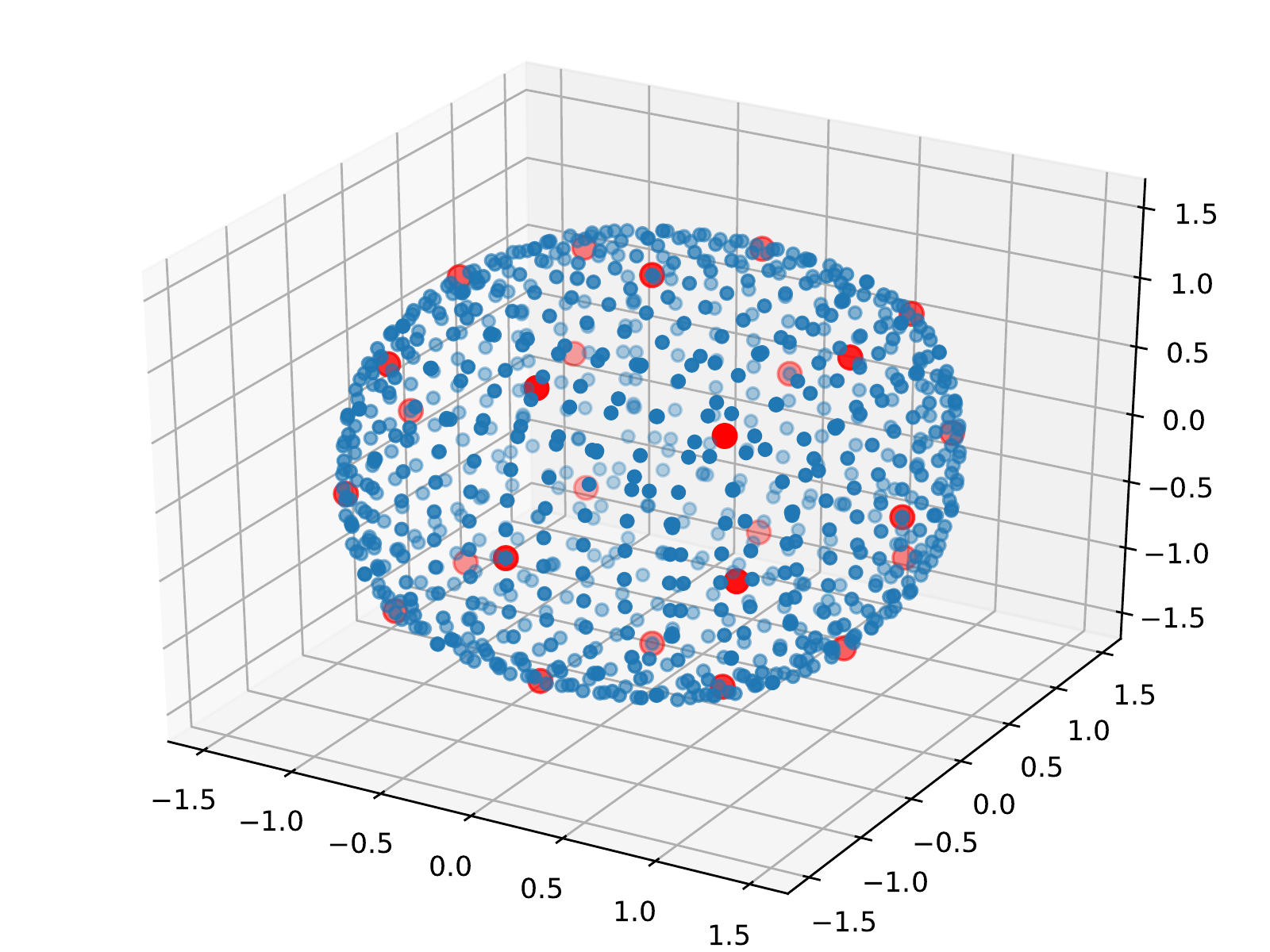}}
     \qquad
     \subfloat[][Bottlenecks.]{
     \includegraphics[scale=0.3]{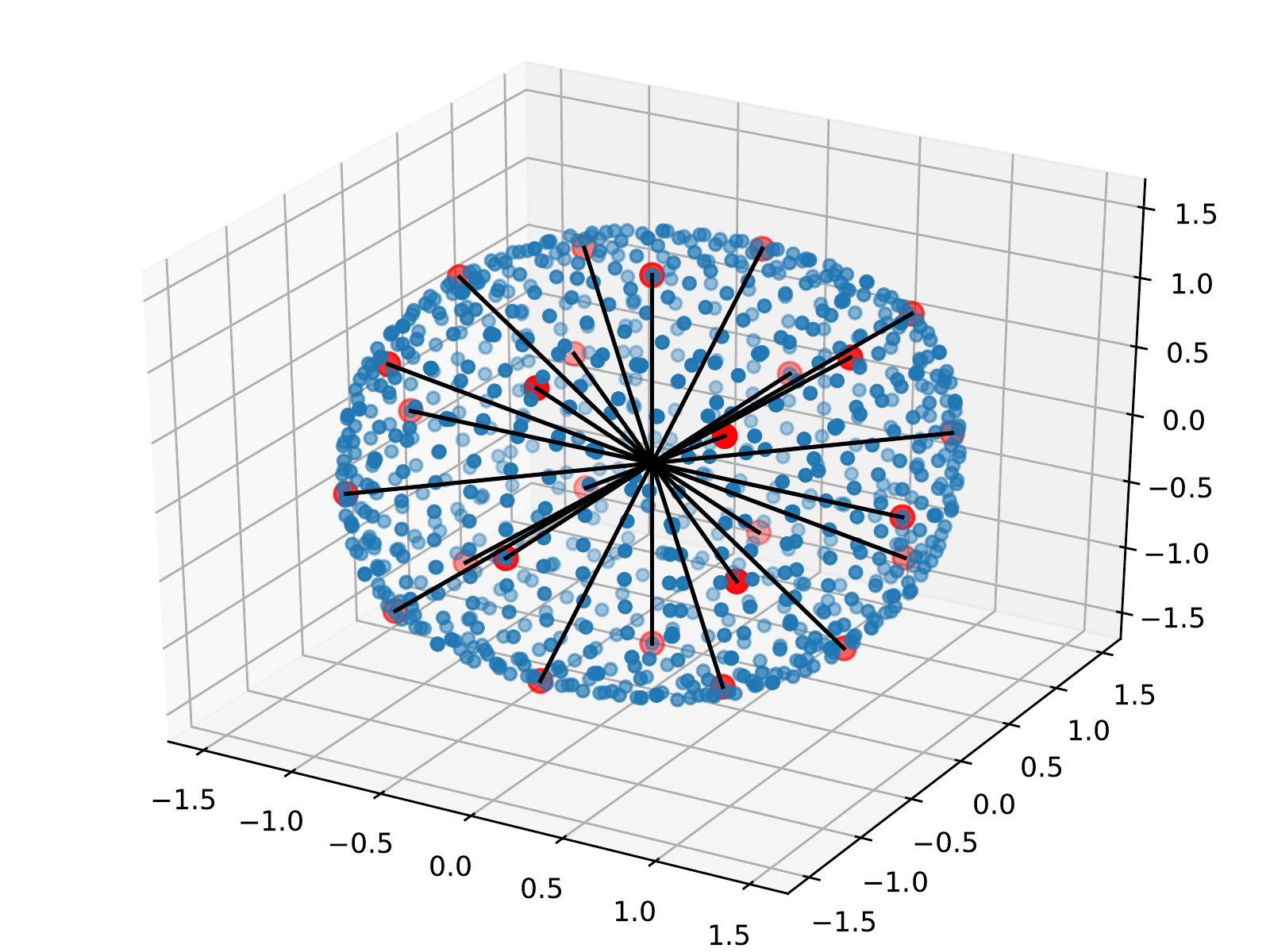}}
\end{figure}
\end{ex}

\begin{ex}
The following example originates from computational chemistry and
explores the geometry of molecular conformation spaces. A cycloheptane
molecule is a ring of seven coal atoms with a pair of hydrogen atoms
bonded to each coal atom. Cycloheptane is a special case of a
cycloalkane molecule and we will use a mechanical model of such
molecules from the computational chemistry literature
\cite{cyclo-8}. For the geometry of this model only the positions of
the coal atoms in space are relevant. One way of viewing the model is
to consider a conformation, or configuration, of the molecule as an
embedding into $\RR^3$ of the cycle-graph with 7 vertices, subject to
constraints. The constraints are given by fixing the edge lengths,
that is the distance between two consecutive atoms in the ring, as
well as the angle between consecutive edges, that is the bond
angles. In this model all the edge lengths are equal and all the bond
angles are equal. We may set the edge lengths to 1 and the bond angle
for this example was chosen to be 115 degrees, see \cite{CH}
Chapter~25.2 for a discussion of bond angles of cycloalkane molecules.

The variety $X$ parameterizing conformations may be considered in
$\RR^{21}$ with three coordinates for each of the seven vertices of
the graph. All in all we have seven distance constraints and seven
angle constraints. In addition we are only interested in
configurations up to rigid motion, that is up to the action of the
six-dimensional special Euclidean group. We thus expect the variety of
all conformations to be a curve, which it is. One way to deal with
rigid motions is to fix three of the vertices in a way that is
consistent with the distance and angle constraints. This way we get
rid of nine variables and three of the constraints will be
automatically satisfied. This gives us a curve in $\RR^{12}$ defined
by eleven polynomials. Moreover, two of the remaining angle
constraints are now linear and it is therefore natural to consider the
corresponding curve $X \subset \RR^{10}$ defined nine quadratic
polynomials.

One of the most basic questions about the geometry of $X$ is how many
connected components it has. In this example, we apply the method for
computing the isolated points of $I(X_{\CC},X_{\CC})$ to address this
question.

In this context one may ask if the complexification $X_{\CC}$ of $X$
satisfies the assumptions made throughout this paper. It is easy to
verify with techniques from numerical algebraic geometry
\cite{bertini, BHSW, SW} that $X_{\CC}$ is an irreducible curve of
degree 112. Smoothness of $X_{\CC}$ can be verified numerically as
well, at least if done with some care. Instead of considering an ideal
generated by minors of the Jacobian matrix $J$ it is better to
introduce auxiliary variables $v=(v_1,\dots,v_{10})$ and a random
linear form $l(v) \in \CC[v_1,\dots,v_{10}]$ and express the rank
condition on $J$ as nine bilinear equations $Jv^T=0$ together with
$l(v)=0$. Alternatively, one may introduce variables
$v=(v_1,\dots,v_9)$ and express the rank condition as
$vJ=0$. Regarding the assumptions (\ref{eq:assumptions}), the
assumption that the Euclidean distance degree is positive is tested
during the procedure. To avoid intersection with the isotropic quadric
at infinity we make a random change of coordinates, that is we change
coordinates using a real $10\times 10$-matrix. It is enough to use a
random real diagonal matrix, which is preferable in order to preserve
sparseness. One should be aware that changing coordinates by a
non-orthogonal linear transformation does not in general respect
bottlenecks but it does preserve the number of components of
$X$. The assumption that $(X_{\CC})_{\infty}$ is smooth in
\lemmaref{lemma:EDDcorr} is convenient but a bit stronger than
necessary, it is enough that $X_{\CC}$ is disjoint from the isotropic
quadric at infinity. Finally, one may easily check numerically that
$I(X_{\CC},X_{\CC})$ has no higher dimensional component except for
$X_{\CC}$. If that were the case, the normal space at a general point
$x \in X_{\CC}$ would intersect $X_{\CC}$ in some point $y$ such that
the line joining $x$ and $y$ is normal to $X_{\CC}$ at $y$.

The homotopy (\ref{eq:main}) was run with both input varieties equal
to $X_{\CC}$ after changing coordinates as above. This gives us a
lower bound for the distance between two connected components of $X$,
namely $b = \min\{||x-y||: (x,y) \in
(I(X_{\CC},X_{\CC})_0)_{\RR}\}$. To get a sense of the timing, the
start point homotopy follows 5120 paths and there are 448
solutions. This means that the main homotopy follows 100128 paths. The
whole computation takes about 2 h.

Now, given the lower bound $b$ on the distance between connected
components it is straightforward to compute the number of components
of $X$ with a sampling procedure and building the Vietoris-Rips
complex with parameter $r < b/2$. Given a finite sample $E \subset X$
of $X$ this is simply the graph with vertex set $E$ and edges
$\{(e_1,e_2) \in E\times E:0<||e_1-e_2||< 2r\}$. The sampling can be
done with standard homotopy methods. We will not go into details of
this procedure in the present paper but the density of the sample has
to be high enough, as measured with respect to the distance in the
ambient space. Namely, we need to guarantee that given any point of
$X$ there is a point of $E$ at distance less than $r$. It is
straightforward to set up a sampling procedure that guarantees this by
intersecting $X$ with a fine enough grid of hyperplanes in
$\RR^{10}$. This procedure was carried out and the result is that the
curve $X$ has two connected components. Of course, several steps in
this computation are subject to numerical errors, a subject we will
not go into here (see \cite{DEHH} for a related
discussion). \figref{fig:cycloheptane} shows the curve before the
change of coordinates. The curve is drawn using so-called torsion
angles (see \cite{cyclo-8}) as coordinates and a random orthogonal
projection of these to $\RR^3$.
\begin{figure}[ht]
    \centering
    \caption{Cycloheptane conformation curve
      in torsion angles projected to $\RR^3$.}
    \label{fig:cycloheptane} 
      \includegraphics[scale=0.5]{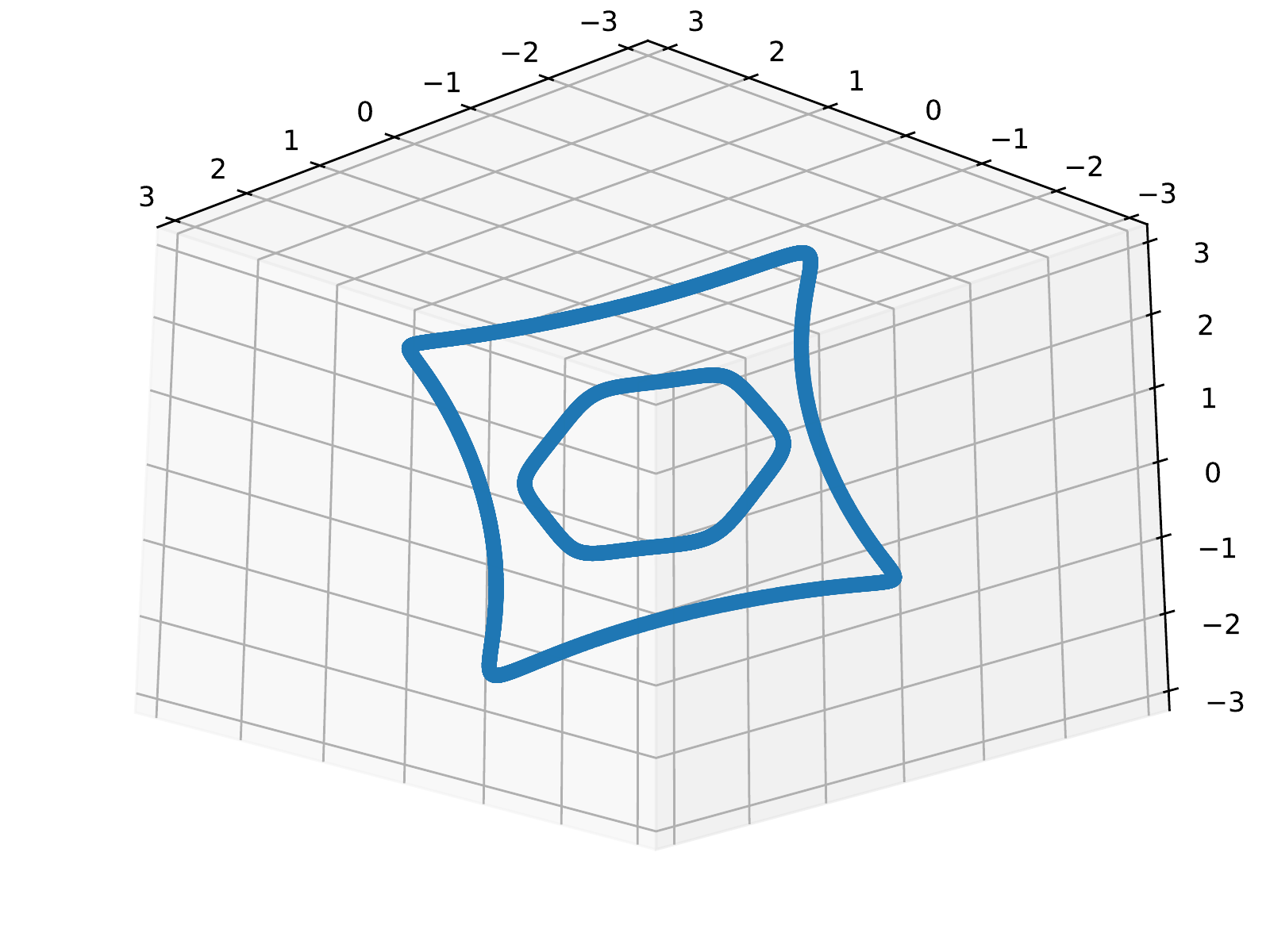}
\end{figure}
\end{ex}

\section{Efficiency} \label{sec:complexity}
The main complexity measure when comparing homotopy methods is the
number of paths. We will compare our approach to solving
(\ref{eq:homocomplex}) with $s=0$ using standard homotopies. For
example one may use a total degree homotopy but since the system is
linear in $v$ and $w$ the comparison was instead made to the more
efficient formulation as a multihomogeneous homotopy over $\CC^{2n}
\times \PP^a \times \PP^b$. There are many alternatives to this, for
example one may eliminate $x$ or $y$ from the equations
(\ref{eq:homocomplex}) with $s=0$, which will decrease the number of
variables but increase the degrees. Another alternative is to not use
the auxiliary variables $v$ and $w$ at all and express the
orthogonality conditions in terms of the vanishing of minors of
matrices composed from the Jacobians $J_F$, $J_G$ and the vector
$x-y$. If these reformulations are useful depends among other things
on the codimension of $X$ and $Y$.

As a first example consider general surfaces $X,Y \subset \CC^3$ of
degree $d$. In this case we have that $|I(X,Y)_0|$ is equal to
$(d^3-d^2+d)^2$ minus a contribution from $X\cap Y$ as in
\exref{ex:surfaces}. Below we include random examples of this kind for
some values of $d$, including one example where $X=Y$ and $X$ is
random of degree 3.

The beneficial effects of the specialized homotopy are best seen when
the invariants of $X$ and $Y$ are such that the number of solutions to
the problem is small. This is gauged during the start point
computations and fewer paths may be followed for the main homotopy in
these cases. For example, if $X,Y$ satisfy the assumptions
(\ref{eq:assumptions}) and $\bar{X},\bar{Y}$ are a smooth curves of
degree $d$ and genus $g$, $\EDD{X}=\EDD{Y}=3d+2(g-1)$. This means that
$\deg{I(X,Y)}=(3d+2(g-1))^2$ for general enough such curves.

As a first example of this consider general rational normal curves
$X,Y \subset \CC^n$ defined by the $2 \times 2$-minors of a $2\times
n$-matrix of general polynomials of degree one. Then
$\deg{I(X,Y)}=(3n-2)^2$. In contrast, the B\'{e}zout number of the
system (\ref{eq:homocomplex}) at $s=0$ is $2^{4n-2}$ and the
multihomogeneous root count is $n^22^{2n-2}$. We see that the start
point computation will actually dominate the complexity in this
situation since the multihomogeneous root count for the start points
is $n2^{n-1}$ while $\deg{I(X,Y)}$ is polynomial of degree 2 in $n$. Below
we include some random examples of this kind for various choices of
$n$.

As a further example consider curves of genus 1. A simple way to
generate such a curve is to take a smooth cubic $C \subset \PP^2$ and
embed $C$ in $\PP^n$ with $n={d+2 \choose 2}-1$ via the $d$-uple
Veronese embedding $v_d:\PP^2 \rightarrow \PP^n$. To get a curve $X
\subset \CC^n$ one may intersect with a general affine open $\CC^n
\subset \PP^n$ and choose coordinates on $\CC^n$ by eliminating one of
the homogeneous coordinates on $\PP^n$. For the examples we have
chosen $C \subset \PP^2$ defined by $x^3+y^3+z^3+xyz=0$. We repeat
this procedure twice to generate two distinct curves $X,Y \subset
\CC^n$ and run the algorithm to compute $I(X,Y)$, that is we use two
different random affine open subsets of $\PP^n$ to generate the curves
but then consider them as distinct curves in the same space
$\CC^n$. The curves $\bar{X}$ and $\bar{Y}$ are elliptic of degree
$3d$ and if the open affine subsets above are general, then
$\deg{I(X,Y)}=9^2d^2$. The ideals of $X$ and $Y$ are generated in
degree 2 and the multihomogeneous root count for these examples is the
same as for the rational curve examples, that is $n^22^{2n-2}$ with
$n={d+2 \choose 2}-1$.

\tabref{tab:paths} and \tabref{tab:timings} show the result in terms
of the number of paths and the time of the computation. The number of
solutions, that is the number of isolated points of $I(X,Y)$, is also
reported. In the case where $X=Y$ is a cubic surface, the number of
isolated bottlenecks is reported as the number of solutions. Cases
that took longer than 5 hours are marked with \mbox{"-"}. The examples
were run using \cite{M2bertini, bertini, M2} and a 2.8 GHz processor
of type Intel i7-2640M. In the tables, the homotopy suggested in this
paper is called "EDD" and the multihomogeneous homotopy is called
"multihom".

\begin{table}[ht] \caption{Some benchmarks: \#paths.}
  \begin{center}
  \label{tab:paths}
  \begin{tabular}{lllll}
    \hline
    Example & EDD & multihom & \#solutions \\
    \hline\\
    Two quadratic surfaces in $\CC^3$ & $2\cdot 6+36$ & 36 & 24\\
    Two cubic surfaces in $\CC^3$ & $2\cdot 36+441$ & 1296 & 396\\
    One cubic surface in $\CC^3$ & $36+210$ & 1296 & 138 \\
    Two quartic surfaces in $\CC^3$ & $2\cdot 108+2704$ & 11664 & 2592\\
    Two rational normal curves in $\CC^3$ & $2\cdot 12+49$ & 144 & 49\\
    Two rational normal curves in $\CC^4$ & $2\cdot 32+100$ & 1024 & 100\\
    Two rational normal curves in $\CC^5$ & $2\cdot 80+169$ & 6400 & 169\\
    Two rational normal curves in $\CC^6$ & $2\cdot 192+256$ & 36864 & 256\\
    Two rational normal curves in $\CC^7$ & $2\cdot 448+361$ & 200704 & 361\\
    Two rational normal curves in $\CC^8$ & $2\cdot 1024+484$ & 1048576 & 484\\
    Two elliptic curves in $\CC^5$ & $2\cdot 80+324$ & 6400 & 324\\
    Two elliptic curves in $\CC^9$ & $2 \cdot 2304+729$ & 5308416 & 729
  \end{tabular}
  \end{center}
\end{table}
\begin{table}[ht] \caption{Some benchmarks: time (s).}
  \begin{center}
  \label{tab:timings}
  \begin{tabular}{llll}
    \hline
    Example & EDD & multihom\\
    \hline\\
    Two quadratic surfaces in $\CC^3$ & 0.9 & 0.8\\
    Two cubic surfaces in $\CC^3$ & 6.3 & 18.5\\
    One cubic surface in $\CC^3$ & 6.4 & 24.1\\
    Two quartic surfaces in $\CC^3$ & 91.4 & 591.5\\
    Two rational normal curves in $\CC^3$ & 1.2 & 9.0\\
    Two rational normal curves in $\CC^4$ & 4.5 & 120.7\\
    Two rational normal curves in $\CC^5$ & 12.3 & 1462.5\\
    Two rational normal curves in $\CC^6$ & 31.1 & 16086.9\\
    Two rational normal curves in $\CC^7$ & 124.5 & -\\
    Two rational normal curves in $\CC^8$ & 347.7 & -\\
    Two elliptic curves in $\CC^5$ & 23.9 & 2183.9\\
    Two elliptic curves in $\CC^9$ & 867.1 & -
  \end{tabular}
  \end{center}
\end{table}

We conclude with a remark on how to measure the complexity of
computations in algebraic geometry. The number of solutions to a
problem such as the one studied in this paper depends on invariants of
algebraic varieties, such as Chern classes. As a complement to other
complexity analyses, it is useful to express the complexity of an
algorithm or a problem in terms of these invariants rather than the
length of the input system, the number of variables, the degrees of
defining equations and so on.

\end{document}